\documentclass[11pt]{article}
\usepackage[margin=1.05in]{geometry}
\usepackage[english]{babel}
\usepackage{microtype}
\usepackage{amsmath,amssymb,amsthm,mathtools,bm}
\usepackage{enumitem}
\usepackage{graphicx}
\usepackage{float}
\usepackage{booktabs}
\usepackage{xcolor}
\usepackage{algorithm}
\usepackage{algpseudocode}
\usepackage[colorlinks=true,linkcolor=blue!50!black,citecolor=green!40!black,urlcolor=blue!50!black]{hyperref}
\usepackage[capitalize,nameinlink]{cleveref}

\numberwithin{equation}{section}

\theoremstyle{plain}
\newtheorem{theor}{Theorem}[section]
\newtheorem{lema}[theor]{Lemma}
\newtheorem{prop}[theor]{Proposition}

\theoremstyle{definition}
\newtheorem{defi}[theor]{Definition}

\theoremstyle{remark}
\newtheorem{rmk}[theor]{Remark}

\crefname{theor}{Theorem}{Theorems}
\crefname{lema}{Lemma}{Lemmas}
\crefname{prop}{Proposition}{Propositions}
\crefname{coro}{Corollary}{Corollaries}
\crefname{defi}{Definition}{Definitions}
\crefname{assum}{Assumption}{Assumptions}
\crefname{rmk}{Remark}{Remarks}
\crefname{algorithm}{Algorithm}{Algorithms}

\newcommand{\R}{\mathbb{R}}
\newcommand{\Uad}{\mathcal{U}_{\mathrm{ad}}}
\newcommand{\cU}{\mathcal{U}}
\newcommand{\dd}{\,\mathrm{d}}
\newcommand{\des}{\mathrm{des}}
\DeclareMathOperator{\Lip}{Lip}

\begin{document}

\title{Stochastic Gradient Descent for Traffic and Pedestrian Models via Neural Networks and Parameter Identification}

\author{Edward Lester\thanks{Department of Mathematics and Statistics, San Diego State University, San Diego, CA 92182, USA; email: elester0720@sdsu.edu}
\and
Dao Nguyen\thanks{Corresponding author. Department of Mathematics and Statistics, San Diego State University, San Diego, CA 92182, USA; email: dnguyen28@sdsu.edu}}

\date{}
\maketitle
\thispagestyle{empty}

\begin{abstract}
We study the calibration of interaction-driven ordinary differential equation (ODE) models for traffic and pedestrian dynamics by means of optimal control and stochastic gradient descent. The interaction forces are either prescribed by classical physics-inspired laws, whose parameters are identified from trajectory data, or represented by feed-forward neural networks whose weights act as control variables. For the neural-network-driven particle system we prove well-posedness of the state equation, Lipschitz-continuous dependence of the trajectories on the initial data and on the network parameters, existence of an optimal parameter set, and a first-order optimality system with an explicit backward adjoint equation. The resulting adjoint representation of the gradient is combined with a projected mini-batch ADADELTA scheme, so that each gradient evaluation requires one forward and one backward ODE solve, independently of the number of network parameters. Building on this framework we propose two new second-order models, one for vehicular traffic and one for crowds, that combine goal-seeking, learned interaction, repulsion--alignment or dissipation, and external control terms. Numerical experiments compare six models (a follow-the-leader model of Lighthill--Whitham--Richards type, a neural-network traffic model, the social force model, a neural-network pedestrian model, and the two new models) in terms of cost reduction and root-mean-square trajectory deviation. The neural-network pedestrian model achieves the lowest final cost, while the richer second-order models trade a higher calibration cost for individualised, behaviourally interpretable dynamics. We conclude by discussing limitations and directions for incorporating richer behavioural and environmental effects.
\end{abstract}

\noindent{\bf Keywords.} Optimal control; parameter identification; stochastic gradient descent; adjoint method; neural networks; interacting particle systems; traffic flow; pedestrian dynamics.

\noindent{\bf Mathematics Subject Classification (2020).} 49J15, 49K15, 34H05, 68T07, 90B20, 90C15.

%======================================================================
\section{Introduction}\label{sec:intro}
%======================================================================

This paper explores methodologies for the calibration of interaction models for particle systems, with direct focus on applications in pedestrian dynamics and traffic flow. Traditional models, such as the Lighthill--Whitham--Richards (LWR) traffic model \cite{LighthillWhitham1955,Richards1956} and the social force model for crowd motion \cite{HelbingMolnar1995}, rely on explicit physical assumptions and predefined interaction forces. While these models have proven effective, their limitations become apparent when attempting to replicate real-world behaviour across diverse scenarios. The variability of human behaviour and of environmental conditions often challenges the assumptions built into classical models, which motivates more flexible, data-driven approaches that can adapt to complex and nonlinear interaction patterns. The relation between microscopic follow-the-leader dynamics and the macroscopic LWR model provides an important example of how agent-based and continuum descriptions complement one another \cite{HoldenRisebro2018}, and second-order car-following models such as the optimal velocity and intelligent driver models \cite{Bando1995,Treiber2000} show how driver intent and relative speed can be incorporated. Likewise, the social force model remains a foundational microscopic description of pedestrian interactions \cite{HelbingMolnar1995}.

From a broader perspective, pedestrian and traffic systems belong to the class of interacting particle systems and collective dynamics. Kinetic descriptions provide a natural bridge between microscopic agent-based models and macroscopic continuum equations \cite{PareschiToscani2013,Toscani2006,Carrillo2010}. Related frameworks have been applied to swarming, flocking, opinion formation, and other forms of collective behaviour \cite{CastellanoFortunatoLoreto2009,CuckerSmale2007}. This broader mathematical setting motivates interaction-driven learning algorithms that extract effective dynamical laws from experimental and trajectory data.

Pedestrian dynamics has attracted considerable attention because of its importance in urban planning, evacuation, autonomous navigation, and public safety. Classical microscopic models describe pedestrian motion through attractive and repulsive forces intended to mimic human interactions \cite{HelbingMolnar1995}; comprehensive surveys can be found in \cite{Schadschneider2018,Bellomoetal2022}. Data-driven prediction methods based on artificial neural networks have demonstrated promising performance in estimating pedestrian velocities directly from observed trajectories \cite{TordeuxEtAl2019,GottlichKnapp2020}. These developments illustrate both the value of classical modelling principles and the need for flexible learning mechanisms that account for behavioural variability not easily represented by a fixed analytical force law.

Parameter identification has therefore become a fundamental topic in the calibration of dynamical systems arising from traffic flow, pedestrian motion, and interacting particle systems. Instead of prescribing every interaction parameter a priori, recent research recovers unknown parameters or interaction kernels from trajectory data. Bayesian approaches have been proposed for estimating parameters of social force models while simultaneously quantifying uncertainty in the recovered values \cite{CorbettaEtAl2015,Stuart2010}. Similar inverse problems have been investigated for macroscopic pedestrian dynamics, where microscopic trajectory measurements are used to estimate continuum-model parameters \cite{GomesStuartWolfram2019}. These developments demonstrate the increasing importance of combining mathematical modelling with optimisation, inverse-problem techniques, and data assimilation.

Beyond parameter estimation, recent years have witnessed the emergence of data-driven approaches that directly infer interaction laws from observed trajectories. Rather than assuming a predefined force field, these methods aim to reconstruct the interaction kernels governing collective behaviour using machine learning and statistical techniques. Lu et al.\ \cite{Lu2019,LuMaggioniTang2021} developed a nonparametric framework for learning interaction kernels in heterogeneous multi-agent systems from multiple trajectories and established theoretical conditions for learnability. Similar ideas have been explored for discovering emergent collective dynamics from observational data \cite{Zhong2020}. Such approaches provide considerably greater flexibility than traditional physics-based calibration methods while preserving interpretable interaction mechanisms.

Advances in machine learning allow neural networks to learn complex nonlinear functions from data, providing a means to approximate forces and behaviours that are difficult to capture with traditional physics-based models. This paper builds upon these developments, following \cite{GottlichTotzeck2022}, by using a neural-network-based framework to model the forces that direct agent interactions. Instead of manually defining the form of every interaction force, the approach allows the neural network to learn directly from empirical data. This data-driven modelling can improve the realism and adaptability of interaction models, making them more suitable for simulation and prediction in dynamic environments.

Continuous-depth neural networks have further strengthened the connection between machine learning and dynamical systems. Neural ODEs interpret deep neural networks as continuous dynamical systems whose evolution is governed by differential equations \cite{ChenEtAl2018}; computing their gradients with the adjoint method keeps the memory cost constant with respect to the depth of the network, while allowing adaptive numerical integration. Stable architectures inspired by ODEs have been proposed to mitigate exploding and vanishing gradients during training \cite{HaberRuthotto2017}, physics-informed neural networks combine differential-equation structure with data-driven approximation \cite{RaissiPerdikarisKarniadakis2019}, and structure-preserving neural ODEs have been used for learning and control of mechanical systems \cite{DuongEtAl2024}. Together, these developments motivate the use of neural differential models for system identification, parameter calibration, and control.

A key challenge in this endeavour is the calibration of the neural-network parameters to fit observed data. To address this challenge, we frame the calibration task as an optimal control problem in which the network parameters are treated as control variables that influence the overall dynamics of the system. By defining a suitable cost functional that measures the discrepancy between the predicted system trajectories and the data, the problem becomes one of finding the control parameters that minimise this discrepancy. Optimisation subject to differential-equation constraints provides the appropriate mathematical setting for adjoint-based gradient computation and large-scale parameter estimation \cite{HinzeEtAl2009}. Gradient-based methods are particularly natural in this setting, and stochastic gradient methods \cite{RobbinsMonro1951,BottouCurtisNocedal2018} with adaptive step sizes such as ADADELTA \cite{Zeiler2012} and Adam \cite{KingmaBa2015} have become standard tools for training high-dimensional models. To solve the calibration problem, we employ an adjoint-based, projected, mini-batch stochastic gradient algorithm that computes the gradients needed for the optimisation.

The significance of this work lies in its potential to improve predictive models through data-driven machine learning. The framework is designed to be validated on real-world trajectory datasets \cite{trafficdynamics,Pedynamics}, with the aim of producing more realistic, adaptable, and predictive models of collective agent behaviour. Reinforcing this strategy with empirical data for these scenarios can establish the applicability of neural-network-based parameter calibration. The approach not only broadens the scope of earlier work, but also lends itself to more sophisticated modelling with a wider range of applications.

This paper adopts an approach grounded in prior research, drawing upon established results that form the foundation for much of the notation and many of the formulas used throughout \cite{GottlichTotzeck2022,DuongEtAl2024}. In particular, these earlier works provide critical insights into the structure of the optimal control problem and into strategies for parameter calibration. The framework and ideas derived from these works serve as building blocks for the new models presented here. They also informed the design of the computer code that was written to generate the visualisations and results supporting the analysis in this work. Furthermore, an established definition of the cost is used to measure the distance between the observed data and the simulation results for each model.

Although parameter estimation, interaction-kernel learning, neural ODEs, and data-driven pedestrian modelling have each received substantial attention, they are usually studied separately or for a single application. A central objective of this work is to bring these components together in one computational framework that treats both traffic and pedestrian dynamics, regards classical parameter identification and neural-network interaction learning as complementary calibration strategies, and compares them under a common cost functional and a common optimisation method. The main contributions are as follows.
\begin{itemize}[leftmargin=2em,itemsep=0.3em]
\item We formulate the calibration of interaction-driven ODE systems as an optimal control problem in which model parameters, including neural-network weights, act as controls (\cref{sec:ocp}).
\item For the neural-network-driven system we give self-contained proofs of well-posedness, of Lipschitz dependence on data and parameters with explicit constants, of existence of a minimiser, and of a first-order optimality system with an explicit backward adjoint equation (\cref{thm:wellposed,thm:continuousdependence,thm:existence,thm:optimality}).
\item We derive an adjoint-based projected mini-batch ADADELTA algorithm whose per-iteration cost is independent of the number of parameters (\cref{sec:sgd}, \cref{alg:sgd}).
\item We propose a new second-order traffic model and a new crowd model that combine goal seeking, learned interactions, repulsion and alignment or dissipation, and external control (\cref{sec:newtraffic,sec:newcrowd}).
\item We compare six traffic and pedestrian models numerically, quantify cost reduction and trajectory deviation, and identify limitations and concrete directions for future work (\cref{sec:discussion,sec:future}).
\end{itemize}

The remainder of the paper is organised as follows. \Cref{sec:prelim,sec:ocp,sec:sgd} provide an in-depth treatment of the optimal control problem and of the design of the artificial neural network. \Cref{sec:prelim} collects definitions of the relevant terms and the auxiliary results used later. \Cref{sec:ocp} introduces the basic framework of the ODE system, which formalises how individual agents move under the influence of interaction forces, and establishes its analysis. Identifying each element precisely is vital, since we maintain a consistent basis that is referred to and expanded upon in the later sections. \Cref{sec:sgd} introduces gradient descent, primarily the stochastic gradient algorithm, which is used for all models in this paper.

\Cref{sec:traffic} shifts the focus to traffic dynamics. Established models are critically examined with respect to their limitations, advantages, and practical applications, and potential areas for improvement are identified. To ground the discussion, coded implementations have been created and plots of the results are included, offering an applied perspective on the behaviour and performance of these models. Following this, a new traffic model is introduced that synthesises the attributes of the previous models with new features. Its results are contrasted with those of the established models, and the wider question of model selection is explored.

\Cref{sec:crowd} centres on models of crowd dynamics. As in the previous section, two established models are reviewed and discussed together with plots of example simulations. Once more a new model is put forward, and the benefits and drawbacks it possesses, individually and in comparison to the other models, are highlighted. \Cref{sec:discussion} compares all six models in a common table.

\Cref{sec:future} expands on the limitations of our models and identifies multiple areas of improvement. Potential paths for model extension and refinement are discussed, offering a road map for continued research in optimal control and complex system dynamics; besides the attributes already present, entirely new aspects are brought forward that would increase the realism of the models in comparison with real-world data. \Cref{sec:conclusion} concludes the paper, and additional cost tables are collected in \cref{app:tables}.

%======================================================================
\section{Preliminaries}\label{sec:prelim}
%======================================================================

Throughout, $|\cdot|$ denotes the Euclidean norm on $\R^m$ and the induced operator (or Frobenius) norm on matrices, and $\langle\cdot,\cdot\rangle$ the Euclidean inner product. For $T>0$ we use the Lebesgue space $L^2(0,T;\R^m)$ and the Sobolev space $H^1(0,T;\R^m)$ with norm
\[
\|f\|_{H^1(0,T)}=\Big(\int_0^T|f(t)|^2\dd t+\int_0^T|f'(t)|^2\dd t\Big)^{1/2},
\]
which measures both size and smoothness. Recall the continuous embedding $H^1(0,T;\R^m)\hookrightarrow C([0,T];\R^m)$.

A map $f:\R^m\to\R^p$ is \emph{globally Lipschitz} if there is $L\ge 0$ with $|f(x)-f(y)|\le L|x-y|$ for all $x,y$; the smallest such $L$ is denoted $\Lip(f)$. For normed spaces $V,W$ and an open set $O\subset V$, a map $f:O\to W$ is \emph{G\^ateaux differentiable} at $x$ if the directional derivative $f'(x)[h]=\lim_{\epsilon\to0}\epsilon^{-1}(f(x+\epsilon h)-f(x))$ exists for every $h\in V$ and is linear and bounded in $h$; it is \emph{Fr\'echet differentiable} at $x$ if there is a bounded linear operator $B:V\to W$ with $\|f(x+h)-f(x)-Bh\|_W=o(\|h\|_V)$ as $\|h\|_V\to 0$.

We shall use the following classical results.

\begin{theor}[Picard--Lindel\"of]\label{thm:PL}
Let $F:[0,T]\times\R^m\to\R^m$ be continuous and globally Lipschitz in its second argument, uniformly in $t$. Then for every $y_0\in\R^m$ the problem $\dot y=F(t,y)$, $y(0)=y_0$, has a unique solution $y\in C^1([0,T];\R^m)$.
\end{theor}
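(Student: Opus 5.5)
We will prove this by recasting the initial value problem as a fixed-point equation on the Banach space $X=C([0,T];\R^m)$. The problem $\dot y=F(t,y)$, $y(0)=y_0$, will then be solved by the contraction mapping principle. Let $L$ be the uniform Lipschitz constant of $F$ in its second argument. Define
\[
(\Phi y)(t)=y_0+\int_0^t F(s,y(s))\dd s,\qquad t\in[0,T].
\]
Since $F$ is continuous and $y$ is continuous, the integrand $s\mapsto F(s,y(s))$ is continuous on $[0,T]$. Hence $\Phi y\in C^1([0,T];\R^m)\subset X$.

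By the fundamental theorem of calculus, a function $y\in C^1$ solves the IVP if and only if $y\in X$ is a fixed point of $\Phi$. Indeed, every continuous fixed point is automatically $C^1$, with $\dot y(t)=F(t,y(t))$ and $y(0)=y_0$.

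To get a contraction on the whole interval at once, we will not restrict to a short time step. Instead we equip $X$ with the Bielecki norm
\[
\|y\|_\lambda=\sup_{t\in[0,T]}e^{-\lambda t}|y(t)|,\qquad \lambda=2L,
\]
which is equivalent to the sup norm because $e^{-\lambda T}\le e^{-\lambda t}\le 1$. So $(X,\|\cdot\|_\lambda)$ is complete.

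The key estimate is as follows. For $y,z\in X$,
\[
|(\Phi y)(t)-(\Phi z)(t)|\le \int_0^t L\,e^{\lambda s}\,e^{-\lambda s}|y(s)-z(s)|\dd s\le L\|y-z\|_\lambda\frac{e^{\lambda t}-1}{\lambda}.
\]
Multiplying by $e^{-\lambda t}$ gives $\|\Phi y-\Phi z\|_\lambda\le \tfrac{L}{\lambda}\|y-z\|_\lambda=\tfrac12\|y-z\|_\lambda$. (If $L=0$, $\Phi$ is constant and the conclusion is immediate.)

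Banach's fixed point theorem then yields exactly one fixed point in $X$. This gives existence and uniqueness of the $C^1$ solution on the full interval $[0,T]$.

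The only step needing care is obtaining a global-in-time contraction rather than a local one. The weighted norm handles this. An alternative fallback is to iterate the estimate and show that $\Phi^n$ has Lipschitz constant $(LT)^n/n!<1$ for large $n$.

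Uniqueness among all $C^1$ solutions, not just fixed points in $X$, is included, since any $C^1$ solution is a fixed point. One could alternatively derive it via Grönwall's inequality applied to $|y-z|$.
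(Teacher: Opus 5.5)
The paper does not prove this statement. It is listed among the ``classical results'' in the Preliminaries and is used only as a black box in the proof of \cref{thm:wellposed}, after global Lipschitz continuity of $F(\cdot,u)$ has been checked. So there is no paper argument to compare against.

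Your proposal is a correct and complete proof by the standard Bielecki-norm contraction argument:
\begin{itemize}
\item The integral reformulation is exactly equivalent to the $C^1$ initial value problem, because $s\mapsto F(s,y(s))$ is continuous and $\Phi$ maps $X$ into $C^1$.
\item The weighted norm is equivalent to the sup norm, so $(X,\|\cdot\|_\lambda)$ is complete.
\item Your estimate gives $e^{-\lambda t}|(\Phi y)(t)-(\Phi z)(t)|\le \frac{L}{\lambda}(1-e^{-\lambda t})\|y-z\|_\lambda$, which is a contraction on all of $[0,T]$.
\item Uniqueness among all $C^1$ solutions follows because each such solution is a fixed point of $\Phi$.
\end{itemize}

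The only cosmetic point is the separate case $L=0$. You can avoid it by taking any $\lambda>L$, for instance $\lambda=2L+1$, which gives contraction constant $L/\lambda<1$ uniformly.

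Your global-in-time contraction also yields what the paper needs later. Because the solution exists on the whole interval directly, with no local-existence-plus-continuation step, finite-time blow-up is ruled out. That is exactly the content of the remark following \cref{thm:wellposed}.
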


\begin{theor}[Carath\'eodory, linear case]\label{thm:Cara}
Let $A\in L^\infty(0,T;\R^{m\times m})$, $r\in L^2(0,T;\R^m)$ and $r_0\in\R^m$. Then $\dot h=A(t)h+r(t)$, $h(0)=r_0$, has a unique absolutely continuous solution, and $h\in H^1(0,T;\R^m)$.
\end{theor}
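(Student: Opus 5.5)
The plan is to handle existence and uniqueness first, then use the equation itself to get the $H^1$ regularity.

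\emph{Existence.} I would rewrite the problem in integral form,
\[
h(t)=r_0+\int_0^t\big(A(s)h(s)+r(s)\big)\dd s,
\]
and look for a fixed point in $C([0,T];\R^m)$. Set $\|A\|_\infty:=\|A\|_{L^\infty(0,T)}$.

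\begin{itemize}[leftmargin=2em]
\item Define $\Phi(h)(t):=r_0+\int_0^t(A h+r)\dd s$. Since $A\in L^\infty$, $h$ is continuous and $r\in L^2\subset L^1$ (as $T<\infty$), the integrand is in $L^1$, so $\Phi(h)$ is absolutely continuous, in particular continuous.
\item Equip $C([0,T];\R^m)$ with the weighted norm $\|h\|_\lambda:=\sup_t e^{-\lambda t}|h(t)|$ and choose $\lambda>\|A\|_\infty$.
\item The estimate $|\Phi(h)(t)-\Phi(g)(t)|\le \|A\|_\infty\int_0^t e^{\lambda s}\dd s\,\|h-g\|_\lambda$ gives $\|\Phi h-\Phi g\|_\lambda\le (\|A\|_\infty/\lambda)\|h-g\|_\lambda$.
\item Banach's fixed point theorem then yields a unique continuous fixed point.
\end{itemize}

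Any fixed point is absolutely continuous and satisfies $\dot h=Ah+r$ a.e. with $h(0)=r_0$. Conversely, any absolutely continuous solution satisfies the integral equation by the fundamental theorem of calculus for AC functions. So the fixed point is exactly the solution sought.

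\emph{Uniqueness.} Let $w=h_1-h_2$ be the difference of two AC solutions. Then $|w(t)|\le\|A\|_\infty\int_0^t|w|\dd s$, and Gronwall's inequality gives $w\equiv0$. This also follows directly from the contraction argument.

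\emph{Regularity.} First, $h$ is continuous on a compact interval, hence bounded, so $h\in L^2$. Second, $\dot h=Ah+r$ a.e. with
\[
|Ah|\le\|A\|_\infty\|h\|_{C}.
\]
Hence $Ah\in L^\infty\subset L^2$, and $\dot h\in L^2$ because $r\in L^2$. The weak derivative of an AC function coincides with its a.e. derivative, so $h\in H^1(0,T;\R^m)$.

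Gronwall also supplies the quantitative bound $\|h\|_C\le(|r_0|+\sqrt T\|r\|_{L^2})e^{\|A\|_\infty T}$, which controls the $H^1$ norm and will be useful later.

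\emph{Main obstacle.} The only delicate point is the measurability and integrability bookkeeping in the Carathéodory setting, since the right-hand side is only measurable in $t$. Concretely, this means checking that $s\mapsto A(s)h(s)$ is measurable and integrable, and identifying AC solutions with solutions of the integral equation. Both follow from standard Lebesgue theory.
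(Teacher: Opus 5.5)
Your proof is correct and complete. The paper itself gives no proof of this statement. It is quoted in \cref{sec:prelim} as a classical result and used as a black box, so there is no argument of the authors' to compare against.

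Your argument is the standard one. It has three parts: a contraction in the Bielecki norm $\|h\|_\lambda=\sup_t e^{-\lambda t}|h(t)|$ with $\lambda>\|A\|_\infty$, which gives a global fixed point on all of $[0,T]$ in one step; the identification of absolutely continuous solutions with fixed points of the integral equation, which also yields uniqueness; and the observation $\dot h=Ah+r\in L^2$ a.e., which gives the $H^1$ regularity.

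The one small point worth stating explicitly is that $(C([0,T];\R^m),\|\cdot\|_\lambda)$ is complete. This follows from $e^{-\lambda T}\|h\|_C\le\|h\|_\lambda\le\|h\|_C$.

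Your a priori bound $\|h\|_C\le(|r_0|+\sqrt T\,\|r\|_{L^2})e^{\|A\|_\infty T}$ is exactly the estimate the paper carries out in step (A4) of the proof of \cref{thm:optimality}. There it is used to show that $e_x(x(u),u)$ has a bounded inverse.
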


\begin{lema}[Gronwall]\label{lem:gronwall}
Let $\phi\ge0$ be continuous on $[0,T]$ and satisfy $\phi(t)\le C+\int_0^t\alpha(s)\phi(s)\dd s$ with $C\ge0$ and continuous $\alpha\ge0$. Then $\phi(t)\le C\exp\big(\int_0^t\alpha(s)\dd s\big)$ for all $t\in[0,T]$.
\end{lema}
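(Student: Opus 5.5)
We will prove the integral form of Gronwall's inequality (\cref{lem:gronwall}) by turning the integral inequality into a differential inequality for an auxiliary function and then integrating with an exponential factor. Set
\[
\psi(t)=C+\int_0^t\alpha(s)\phi(s)\dd s ,\qquad t\in[0,T].
\]
Since $\alpha$ and $\phi$ are continuous, $\psi\in C^1([0,T])$ by the fundamental theorem of calculus. Moreover $\psi'(t)=\alpha(t)\phi(t)$ and $\psi(0)=C$. The hypothesis gives $\phi\le\psi$ pointwise. Because $\alpha\ge0$, this yields the differential inequality $\psi'(t)\le\alpha(t)\psi(t)$ on $[0,T]$.

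Next we introduce the integrating factor $A(t)=\int_0^t\alpha(s)\dd s$ and the function $w(t)=e^{-A(t)}\psi(t)$, which is again $C^1$. Differentiating gives
\[
w'(t)=e^{-A(t)}\bigl(\psi'(t)-\alpha(t)\psi(t)\bigr)\le 0,
\]
so $w$ is nonincreasing. Hence $w(t)\le w(0)=C$, that is, $\psi(t)\le C e^{A(t)}$. Combining this with $\phi\le\psi$ gives $\phi(t)\le C\exp\bigl(\int_0^t\alpha(s)\dd s\bigr)$, which is the claim.

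The only step needing care is the passage from $\phi\le\psi$ to $\psi'\le\alpha\psi$. This is where the sign condition $\alpha\ge0$ is used: multiplying the inequality $\phi\le\psi$ by $\alpha(t)$ preserves it. Note that nonnegativity of $\phi$ is not actually needed for this argument, and neither is $C\ge0$. Continuity guarantees that $\psi$ is classically differentiable, so the monotonicity argument via the mean value theorem applies directly. If one only had $\alpha\in L^1$, the same proof would go through with absolutely continuous functions, but that generality is not required here.
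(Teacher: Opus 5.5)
Your proof is correct. It is the standard integrating-factor argument: set $\psi=C+\int_0^t\alpha\phi$, deduce $\psi'\le\alpha\psi$ from $\alpha\ge0$, and observe that $e^{-\int_0^t\alpha}\psi$ is nonincreasing. Your remark that neither $\phi\ge0$ nor $C\ge0$ is used is also right.

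The paper gives no proof of this lemma; it lists it among the classical results it relies on, so there is no alternative route to compare. Your version covers every use in the paper, where $C$ is a fixed constant and $\alpha$ is the constant $L_F$ or $C_A$.
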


Finally, we recall a standard first-order optimality result for equality-constrained optimisation in Banach spaces \cite[Cor.~1.3]{HinzeEtAl2009}.

\begin{lema}[Existence of an adjoint state]\label{lem:hinze}
Let $Y,U,Z$ be Banach spaces, $J:Y\times U\to\R$ and $e:Y\times U\to Z$, and let $(\bar y,\bar u)$ be a local solution of
\[
\min_{(y,u)\in Y\times U}J(y,u)\quad\text{subject to}\quad e(y,u)=0,\quad u\in\Uad.
\]
Assume that
\begin{enumerate}[label=\textup{(A\arabic*)},leftmargin=2.6em]
\item $\Uad\subset U$ is nonempty, convex and closed;
\item $J$ and $e$ are continuously Fr\'echet differentiable;
\item there is a neighbourhood $V$ of $\Uad$ such that for every $u\in V$ the equation $e(y,u)=0$ has a unique solution $y=y(u)\in Y$;
\item $e_y(y(u),u)\in\mathcal L(Y,Z)$ has a bounded inverse for every $u\in V$.
\end{enumerate}
Then there exists an adjoint state $\bar p\in Z^*$ such that
\[
e(\bar y,\bar u)=0,\qquad e_y(\bar y,\bar u)^*\bar p=-J_y(\bar y,\bar u),\qquad
\langle J_u(\bar y,\bar u)+e_u(\bar y,\bar u)^*\bar p,\,u-\bar u\rangle_{U^*,U}\ge0\quad\forall u\in\Uad.
\]
\end{lema}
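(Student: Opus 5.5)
The statement is \cref{lem:hinze}, a Lagrange-multiplier result in Banach spaces. I would prove it by eliminating the constraint through the implicit function theorem and then introducing the adjoint state to rewrite the derivative of the reduced functional.

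\textbf{Step 1: the control-to-state map.} By (A3) the map $S:V\to Y$, $u\mapsto y(u)$, is well defined; shrinking $V$ if necessary, I take $V$ open. Fix $u\in V$. By (A2) the map $e$ is $C^1$, and by (A4) $e_y(y(u),u)$ is an isomorphism $Y\to Z$. The implicit function theorem then gives neighbourhoods on which the solution set of $e(y,v)=0$ is the graph of a $C^1$ map $v\mapsto\tilde y(v)$. Uniqueness in (A3) forces $\tilde y=S$ locally, so $S$ is continuously Fr\'echet differentiable on $V$. Differentiating the identity $e(S(u),u)=0$ yields
\[
S'(u)h=-e_y(S(u),u)^{-1}e_u(S(u),u)h .
\]
This identification step needs the most care. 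The implicit function theorem only supplies a local solution branch, and (A3) is what guarantees that this branch coincides with the globally defined map $S$. Openness of $V$ is needed so that directional derivatives along $u-\bar u$ make sense.

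\textbf{Step 2: the variational inequality for the reduced functional.} Set $\hat J(u)=J(S(u),u)$, which is $C^1$ on $V$ by the chain rule. Since $(\bar y,\bar u)$ is a local solution, $\bar u$ locally minimises $\hat J$ over $\Uad$, and $\bar y=S(\bar u)$. For $u\in\Uad$, convexity (A1) gives $\bar u+t(u-\bar u)\in\Uad$ for $t\in[0,1]$. Dividing $\hat J(\bar u+t(u-\bar u))-\hat J(\bar u)\ge0$ by $t$ and letting $t\downarrow0$ gives
\[
\langle\hat J'(\bar u),u-\bar u\rangle\ge0 \quad\text{for all } u\in\Uad .
\]

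\textbf{Step 3: the adjoint state.} Define
\[
\bar p=-\bigl(e_y(\bar y,\bar u)^{*}\bigr)^{-1}J_y(\bar y,\bar u)\in Z^* .
\]
This is well defined because the adjoint of a bounded isomorphism is a bounded isomorphism $Z^*\to Y^*$, with $(e_y^{-1})^*=(e_y^*)^{-1}$. With this choice, $e(\bar y,\bar u)=0$ and $e_y^*\bar p=-J_y$ hold by construction.

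\textbf{Step 4: rewriting the gradient.} By the chain rule and Step 1,
\[
\hat J'(\bar u)h=J_y S'(\bar u)h+J_u h=-\langle J_y,e_y^{-1}e_u h\rangle+J_u h .
\]
For the first term,
\[
-\langle J_y,e_y^{-1}e_u h\rangle=-\langle (e_y^{-1})^*J_y,e_u h\rangle=\langle\bar p,e_u h\rangle=\langle e_u^*\bar p,h\rangle .
\]
Hence $\hat J'(\bar u)=J_u+e_u^*\bar p$. Substituting this into the variational inequality of Step 2 gives the third condition of the lemma. The remaining manipulations are routine duality pairings.

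Closedness in (A1) plays no role in this argument; it would only be needed for existence of minimisers, not for the optimality conditions.
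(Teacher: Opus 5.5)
Your proof is correct and is the standard reduced-functional argument: the implicit function theorem gives $u\mapsto y(u)$, you derive the first-order variational inequality for $\hat J$ on the convex set $\Uad$, and you rewrite the derivative via the adjoint as $\hat J'(\bar u)=J_u+e_u^*\bar p$. This is how the result of Hinze et al.\ that the paper cites is obtained, and while the paper does not reprove the lemma, its proof of \cref{prop:adjoint-gradient} uses the same mechanism. The only step you leave implicit is the transfer from local optimality of $(\bar y,\bar u)$ to local optimality of $\bar u$ for $\hat J$ on $\Uad$; it holds because $S$ is continuous at $\bar u$ and $\bar y=S(\bar u)$ by the uniqueness in (A3), both of which your Step~1 already provides.
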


%======================================================================
\section{The optimal control problem}\label{sec:ocp}
%======================================================================

\subsection{Interacting particle system driven by neural networks}

We consider $N$ agents with states $x_i(t)\in\R^n$, $i=1,\dots,N$, and write $x=(x_1,\dots,x_N)\in\R^{Nn}$. The interaction between agents is described by feed-forward neural networks.

\begin{defi}[Feed-forward neural network]\label{def:NN}
Let $L\ge 2$ and widths $n^{(1)},\dots,n^{(L)}\in\mathbb N$. A feed-forward network with $L$ layers maps an input $r\in\R^{n^{(1)}}$ to an output $\mathcal N_u(r)=a^{(L)}\in\R^{n^{(L)}}$ as follows.
\begin{itemize}
\item Input layer: $a^{(1)}_1=1$ (bias unit) and $a^{(1)}_k=r_{k-1}$ for $k=2,\dots,n^{(1)}+1$.
\item Hidden layers $l=2,\dots,L-1$: $a^{(l)}_1=1$ and
$a^{(l)}_k=g^{(l)}\Big(\sum_{j=1}^{n^{(l-1)}+1}u^{(l-1)}_{j,k}a^{(l-1)}_j\Big)$ for $k=2,\dots,n^{(l)}+1$.
\item Output layer (no bias unit): $a^{(L)}_k=g^{(L)}\Big(\sum_{j=1}^{n^{(L-1)}+1}u^{(L-1)}_{j,k}a^{(L-1)}_j\Big)$ for $k=1,\dots,n^{(L)}$.
\end{itemize}
Here $g^{(l)}:\R\to\R$ are activation functions applied componentwise and $u^{(l)}\in\R^{(n^{(l)}+1)\times n^{(l+1)}}$ contains the weights connecting layer $l$ (including its bias unit) to layer $l+1$; $u^{(l)}_{j,k}$ is the weight from neuron $a^{(l)}_j$ to $a^{(l+1)}_k$ (in the index shift of the hidden-layer formula, $u^{(l-1)}_{j,k}$ connects $a^{(l-1)}_j$ to $a^{(l)}_k$). All weights are stacked into a single vector $u\in\R^K$, $K=\sum_{l=1}^{L-1}(n^{(l)}+1)\,n^{(l+1)}$.
\end{defi}

The neural interaction law between agents $i$ and $j$ is a network $W^{i,j}_u:\R^n\to\R^n$ in the sense of \cref{def:NN} with $n^{(1)}=n^{(L)}=n$. In the most common situation all pairs share one network, $W_u^{i,j}=\mathcal N_u$, as in \cite{GottlichTotzeck2022}; more generally, the weights of $W^{i,j}_u$ form a sub-vector of $u$. All results below hold in either case. The state $x=x(u)$ is governed by
\begin{equation}\label{eq:ode}
\frac{\dd}{\dd t}x_i(t)=\sum_{j=1}^N W_u^{i,j}\big(x_j(t)-x_i(t)\big),\qquad x_i(0)=z_i^0,\qquad i=1,\dots,N,
\end{equation}
which we abbreviate as $\dot x=F(x,u)$, $x(0)=z^0$, with $F_i(x,u)=\sum_{j=1}^N W_u^{i,j}(x_j-x_i)$.

Given reference trajectories $z=(z_1,\dots,z_N)\in L^2(0,T;\R^{Nn})$, for instance measured positions, the discrepancy between model and data is measured by
\begin{equation}\label{eq:cost}
J(x;z)=\frac12\int_0^T|z(t)-x(t)|^2\dd t=\frac12\sum_{i=1}^N\int_0^T|z_i(t)-x_i(t)|^2\dd t=\frac12\|z-x\|^2_{L^2(0,T;\R^{Nn})}.
\end{equation}
With the admissible set $\Uad:=[-1,1]^K$ the parameter identification problem reads
\begin{equation}\label{E:3}
\min_{u\in\Uad}\ \hat J(u):=J(x(u);z)\qquad\text{subject to \eqref{eq:ode}}.
\end{equation}
We refer to $\hat J$ as the reduced cost functional and, when no ambiguity arises, simply write $x(t)$ for $x(u)(t)$. To interpret the size of the cost we use the root-mean-square deviation per agent and unit time,
\begin{equation}\label{eq:rmsd}
\bar d:=\sqrt{\frac{2J}{N\,T}}=\Big(\frac{1}{NT}\sum_{i=1}^N\int_0^T|z_i-x_i|^2\dd t\Big)^{1/2},
\end{equation}
which carries the physical unit of the positions (metres in our experiments).

\begin{rmk}
Since both $z$ and $x(u)$ belong to $L^2(0,T;\R^{Nn})$, so does their difference, and the norm in \eqref{eq:cost} is well defined. Explicitly, since $z(t)=(z_1(t),\dots,z_N(t))$ and $x(t)=(x_1(t),\dots,x_N(t))$ with $z_i(t),x_i(t)\in\R^n$,
\[
\|z-x(u)\|^2_{L^2(0,T;\R^{Nn})}=\int_0^T|z(t)-x(t)|^2_{\R^{Nn}}\dd t,\qquad |z(t)-x(t)|^2_{\R^{Nn}}=\sum_{i=1}^N|z_i(t)-x_i(t)|^2_{\R^n},
\]
so that
\[
J(x(u);z)=\frac12\int_0^T|z(t)-x(t)|^2_{\R^{Nn}}\dd t=\frac12\|z-x(u)\|^2_{L^2(0,T;\R^{Nn})}.
\]
Minimising $J$ with respect to $u$ therefore yields the parameters that best fit the data over the full time interval.
\end{rmk}

\subsection{Well-posedness and stability}

\begin{lema}[Regularity of the network]\label{lem:NN}
Let all activation functions $g^{(l)}$, $l=2,\dots,L$, be globally Lipschitz and let $\cU_R:=\{u\in\R^K:|u|\le R\}$. Then there is a constant $C_{\mathcal N}\ge1$, depending only on $R$, the widths, $\max_l\Lip(g^{(l)})$ and $\max_l|g^{(l)}(0)|$, such that for all $r,s\in\R^{n^{(1)}}$ and $u,\bar u\in\cU_R$:
\begin{align}
|\mathcal N_u(r)-\mathcal N_u(s)|&\le C_{\mathcal N}\,|r-s|,\label{eq:NN1}\\
|\mathcal N_u(r)|&\le C_{\mathcal N}\,(1+|r|),\label{eq:NN2}\\
|\mathcal N_u(r)-\mathcal N_{\bar u}(r)|&\le C_{\mathcal N}\,(1+|r|)\,|u-\bar u|.\label{eq:NN3}
\end{align}
If, in addition, $g^{(l)}\in C^1(\R)$ with $(g^{(l)})'$ bounded and locally Lipschitz, then $(r,u)\mapsto\mathcal N_u(r)$ is continuously differentiable on $\R^{n^{(1)}}\times\R^K$ with locally Lipschitz derivative.
\end{lema}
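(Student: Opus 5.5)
The plan is induction over the layers of \cref{def:NN}. Write the network as a composition of layer maps. For $l=1,\dots,L-1$ let $\Phi_l(v,a)=g^{(l+1)}(v^\top a)$, where $a\in\R^{n^{(l)}+1}$ is the previous activation vector including its bias entry and $v=u^{(l)}$. Then prepend the bias $1$ at every layer except the output. For $u\in\cU_R$ every block satisfies $|u^{(l)}|\le|u|\le R$. Set $\Lambda:=\max_l\Lip(g^{(l)})$ and $\gamma:=\max_l|g^{(l)}(0)|$, and let $w$ be the maximal width. All constants below depend only on $R,\Lambda,\gamma,w,L$, and at the end $C_{\mathcal N}$ is the maximum of the constants produced, together with $1$.

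\textbf{Step 1 (bounds \eqref{eq:NN1}, \eqref{eq:NN2}).} Denote by $a^{(l)}(r)$ the activations for input $r$. The bias entry contributes a constant $1$ in both inputs, so it cancels in differences and $|a^{(1)}(r)-a^{(1)}(s)|=|r-s|$. The componentwise Lipschitz bound for $g^{(l+1)}$ and $|v^\top(a-b)|\le R|a-b|$ give
\[
|a^{(l+1)}(r)-a^{(l+1)}(s)|\le \Lambda R\,|a^{(l)}(r)-a^{(l)}(s)|.
\]
Iterating yields \eqref{eq:NN1} with constant $(\Lambda R)^{L-1}$. For the growth bound, $|g(\xi)|\le\gamma+\Lambda|\xi|$ componentwise. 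This gives
\[
|a^{(l+1)}|\le 1+\sqrt{w}\,\gamma+\Lambda R|a^{(l)}|,\qquad |a^{(1)}|\le 1+|r|,
\]
and induction yields $|a^{(l)}(r)|\le c_l(1+|r|)$, hence \eqref{eq:NN2}.

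\textbf{Step 2 (bound \eqref{eq:NN3}).} Compare $a^{(l)}$ computed with $u$ and $\bar a^{(l)}$ computed with $\bar u$ for the same input $r$. Add and subtract $g(\bar v^\top a)$:
\[
|a^{(l+1)}-\bar a^{(l+1)}|\le\Lambda\big(|v-\bar v|\,|a^{(l)}|+R\,|a^{(l)}-\bar a^{(l)}|\big)\le \Lambda c_l(1+|r|)|u-\bar u|+\Lambda R|a^{(l)}-\bar a^{(l)}|.
\]
Starting from $a^{(1)}=\bar a^{(1)}$, a discrete Gronwall-type induction gives $|a^{(l)}-\bar a^{(l)}|\le d_l(1+|r|)|u-\bar u|$, which is \eqref{eq:NN3}. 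The one point needing care is that the growth bound of Step 1 must be available uniformly on $\cU_R$. That is why it is proved first, and why the constant depends on $R$.

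\textbf{Step 3 (smoothness).} Each layer map $(v,a)\mapsto g^{(l+1)}(v^\top a)$ is the composition of the bilinear, hence $C^\infty$, map $(v,a)\mapsto v^\top a$ with the componentwise $C^1$ map $g^{(l+1)}$. The chain rule then shows that $(r,u)\mapsto\mathcal N_u(r)$ is $C^1$. For local Lipschitz continuity of the derivative, the derivative of a layer is
\[
\mathrm{diag}\big(g'(v^\top a)\big)\,\big[\,\delta v^\top a+v^\top\delta a\,\big].
\]
It is a product of the locally Lipschitz factor $g'(v^\top a)$, which is locally Lipschitz as a composition of locally Lipschitz maps, with factors $v,a$ that are locally Lipschitz and locally bounded. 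A product of locally bounded, locally Lipschitz maps is locally Lipschitz. The chain rule expresses $D\mathcal N$ as products of layer derivatives evaluated at the activations, which are themselves locally Lipschitz in $(r,u)$ by Steps 1--2. Hence $D\mathcal N$ is locally Lipschitz.

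The main obstacle is only bookkeeping in this last step: the composed products must be tracked on bounded sets. I would handle it by an induction asserting that $(r,u)\mapsto(a^{(l)},Da^{(l)})$ is locally bounded and locally Lipschitz for each $l$.
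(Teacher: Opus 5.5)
Your proposal is correct and follows essentially the same route as the paper: a layer-by-layer induction on the bias-augmented activation vectors giving the per-layer Lipschitz factor $\Lambda R$, the affine growth bound, and a parameter-perturbation recursion that uses the growth bound, followed by the chain rule for the $C^1$ statement. The only differences are cosmetic: in Step~2 you add and subtract $g(\bar v^\top a)$ where the paper uses the other split, and in Step~3 you spell out the local-Lipschitz bookkeeping for the derivative, which the paper only sketches.
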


\begin{proof}
Write $\hat a^{(l)}(r;u)$ for the vector of all neurons of layer $l$ including the bias unit, so that $\hat a^{(1)}=(1,r)$ and $\hat a^{(l+1)}=(1,g^{(l+1)}((u^{(l)})^{\top}\hat a^{(l)}))$ (without the leading $1$ for $l+1=L$). Let $\Lambda:=\max_l\Lip(g^{(l)})$ and note $|u^{(l)}|\le|u|\le R$. Since $g^{(l+1)}$ acts componentwise, it is Lipschitz with constant $\Lambda$ as a vector map and satisfies $|g^{(l+1)}(\xi)|\le c_0+\Lambda|\xi|$ with $c_0$ depending on $|g^{(l+1)}(0)|$ and the widths.

\emph{(i)} $|\hat a^{(1)}(r)-\hat a^{(1)}(s)|=|r-s|$ and $|\hat a^{(l+1)}(r)-\hat a^{(l+1)}(s)|\le\Lambda R\,|\hat a^{(l)}(r)-\hat a^{(l)}(s)|$; induction gives \eqref{eq:NN1}.

\emph{(ii)} $|\hat a^{(1)}(r)|\le 1+|r|$ and $|\hat a^{(l+1)}(r)|\le 1+c_0+\Lambda R\,|\hat a^{(l)}(r)|$; induction gives $|\hat a^{(l)}(r;u)|\le C_l(1+|r|)$ for all $l$, hence \eqref{eq:NN2}.

\emph{(iii)} Let $\delta_l:=|\hat a^{(l)}(r;u)-\hat a^{(l)}(r;\bar u)|$. Then $\delta_1=0$ and
\[
\delta_{l+1}\le\Lambda\big(|u^{(l)}|\,\delta_l+|u^{(l)}-\bar u^{(l)}|\,|\hat a^{(l)}(r;\bar u)|\big)\le\Lambda\big(R\,\delta_l+C_l(1+|r|)\,|u-\bar u|\big),
\]
so that by induction $\delta_l\le\tilde C_l(1+|r|)|u-\bar u|$, which is \eqref{eq:NN3}.

The last statement follows from the chain rule, since $\mathcal N_u(r)$ is a finite composition of bilinear maps $(u^{(l)},a)\mapsto(u^{(l)})^\top a$ and $C^1$ componentwise activations with locally Lipschitz derivatives.
\end{proof}

\begin{theor}[Well-posedness of the state equation]\label{thm:wellposed}
Assume that all activation functions $g^{(l)}$, $l=2,\dots,L$, are globally Lipschitz. Then, for every $u\in\R^K$ and every initial datum $z^0\in\R^{Nn}$, system \eqref{eq:ode} admits a unique solution $x\in C^1([0,T];\R^{Nn})$.
\end{theor}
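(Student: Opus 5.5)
The plan is to apply the Picard--Lindel\"of theorem (\cref{thm:PL}) to the autonomous right-hand side $F(\cdot,u):\R^{Nn}\to\R^{Nn}$ of \eqref{eq:ode}, with $u\in\R^K$ fixed. Since $F$ does not depend on $t$, continuity in $(t,x)$ reduces to continuity in $x$. Uniformity in $t$ is then automatic, so everything hinges on showing that $x\mapsto F(x,u)$ is globally Lipschitz on $\R^{Nn}$.

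First I fix $u$ and choose $R:=|u|$, so that $u\in\cU_R$. Each network $W^{i,j}_u$ is defined by a sub-vector of $u$ whose norm is at most $R$, so \cref{lem:NN} applies to every $W^{i,j}_u$. Estimate \eqref{eq:NN1} then gives a single constant $C_{\mathcal N}$, depending on $R$, the widths and the activation data, with
\[
|W^{i,j}_u(r)-W^{i,j}_u(s)|\le C_{\mathcal N}|r-s|
\]
for all $r,s\in\R^n$ and all pairs $(i,j)$. In particular each $W^{i,j}_u$ is continuous, and hence so is $F(\cdot,u)$.

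Next, for $x,y\in\R^{Nn}$ I estimate componentwise:
\[
|F_i(x,u)-F_i(y,u)|\le\sum_{j=1}^N C_{\mathcal N}\big(|x_j-y_j|+|x_i-y_i|\big)\le C_{\mathcal N}\big(\sqrt N\,|x-y|+N|x_i-y_i|\big),
\]
where the first sum is bounded by Cauchy--Schwarz. Squaring, summing over $i$ and using $(a+b)^2\le 2a^2+2b^2$ yields
\[
|F(x,u)-F(y,u)|\le 2NC_{\mathcal N}\,|x-y|.
\]
This is a global Lipschitz bound. \cref{thm:PL} then gives a unique solution $x\in C^1([0,T];\R^{Nn})$ for every $z^0$.

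No step is a real obstacle. The only point needing care is that $C_{\mathcal N}$ in \cref{lem:NN} depends on the radius $R$. Choosing $R=|u|$ for the fixed $u$ handles this, since the Lipschitz constant in $x$ is then finite; it is simply not uniform in $u$, and uniformity is not required here. The same argument covers both the shared network $W^{i,j}_u=\mathcal N_u$ and distinct sub-vector networks. Finally, \cref{lem:NN} is stated for networks on $\R^{n^{(1)}}$, and here $n^{(1)}=n^{(L)}=n$, so it applies directly to each $W^{i,j}_u$.
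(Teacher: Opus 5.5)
Your proof is correct and follows the paper's argument. Like the paper, you fix $u$, take $R\ge|u|$ so that \eqref{eq:NN1} makes each $W^{i,j}_u$ globally Lipschitz with constant $C_{\mathcal N}$, derive a global Lipschitz bound for $F(\cdot,u)$, and invoke \cref{thm:PL}; the only difference is that your Cauchy--Schwarz step gives the constant $2NC_{\mathcal N}$, slightly sharper than the paper's $L_F=2N^{3/2}C_{\mathcal N}$, which does not affect the conclusion.
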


\begin{proof}
Fix $u$ and choose $R\ge|u|$. By \eqref{eq:NN1}, each $W^{i,j}_u$ is globally Lipschitz with constant $C_{\mathcal N}$. For $x,y\in\R^{Nn}$,
\[
|F_i(x,u)-F_i(y,u)|\le C_{\mathcal N}\sum_{j=1}^N\big(|x_j-y_j|+|x_i-y_i|\big)\le 2NC_{\mathcal N}\,|x-y|,
\]
and summing the squares over $i$ gives $|F(x,u)-F(y,u)|\le L_F|x-y|$ with $L_F:=2N^{3/2}C_{\mathcal N}$. Hence $F(\cdot,u)$ is globally Lipschitz and continuous, and \cref{thm:PL} yields a unique solution $x\in C^1([0,T];\R^{Nn})$.
\end{proof}

\begin{rmk}
\Cref{thm:wellposed} states that the learned dynamics are deterministic and well defined: whenever the activations have bounded slope (as for $\tanh$, the logistic function, ReLU or softplus), the network-driven model produces exactly one trajectory for each initial configuration and each parameter vector. In particular, the forward simulations used inside the optimisation never break down in finite time.
\end{rmk}

\begin{theor}[Continuous dependence on data and parameters]\label{thm:continuousdependence}
Let the assumptions of \cref{thm:wellposed} hold, let $R,R_0>0$, and let $x,\bar x$ be the solutions of \eqref{eq:ode} corresponding to $(x_0,u)$ and $(\bar x_0,\bar u)$ with $|x_0|,|\bar x_0|\le R_0$ and $u,\bar u\in\cU_R$. Then there is a constant $M>0$, depending only on $T$, $N$, $R$, $R_0$, the network architecture and the activation functions, such that
\[
\|x-\bar x\|_{H^1(0,T;\R^{Nn})}\le M\big(|x_0-\bar x_0|+|u-\bar u|\big).
\]
\end{theor}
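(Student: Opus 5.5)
The plan is a Gronwall argument in integral form, using the Lipschitz and parameter-sensitivity bounds of \cref{lem:NN} with the constant $C_{\mathcal N}$ for the ball $\cU_R$. First we need an a priori bound on the trajectory. By \eqref{eq:NN2},
\[
|F_i(x,u)|\le\sum_{j=1}^N C_{\mathcal N}(1+|x_j-x_i|)\le NC_{\mathcal N}+2NC_{\mathcal N}|x|,
\]
so $|F(x,u)|\le c_1(1+|x|)$ with $c_1$ depending only on $N$ and $C_{\mathcal N}$. Writing $|x(t)|\le R_0+\int_0^t c_1(1+|x(s)|)\dd s$, \cref{lem:gronwall} applied to $1+|x|$ gives
\[
\sup_{[0,T]}|x(t)|\le R_x:=(1+R_0)e^{c_1T},
\]
and the same bound holds for $\bar x$. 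This step is routine but essential, because \eqref{eq:NN3} carries the factor $(1+|r|)$, which must be controlled uniformly along the trajectories.

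Next comes the difference estimate. Split
\[
F(x,u)-F(\bar x,\bar u)=\big[F(x,u)-F(\bar x,u)\big]+\big[F(\bar x,u)-F(\bar x,\bar u)\big].
\]
The first bracket is bounded by $L_F|x-\bar x|$, with $L_F=2N^{3/2}C_{\mathcal N}$ as in the proof of \cref{thm:wellposed}. For the second bracket, apply \eqref{eq:NN3} termwise with $r=\bar x_j-\bar x_i$ and use $|r|\le 2R_x$. This gives
\[
|F(\bar x,u)-F(\bar x,\bar u)|\le c_2(1+2R_x)|u-\bar u|=:c_3|u-\bar u|.
\]
If $W^{i,j}_u$ uses only a sub-vector of $u$, the bound persists, since the distance between sub-vectors is at most $|u-\bar u|$. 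Setting $\phi=|x-\bar x|$, we obtain
\[
\phi(t)\le|x_0-\bar x_0|+Tc_3|u-\bar u|+\int_0^t L_F\phi\dd s,
\]
and \cref{lem:gronwall} yields
\[
\sup_t\phi\le e^{L_FT}\big(|x_0-\bar x_0|+Tc_3|u-\bar u|\big).
\]

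Finally we upgrade to $H^1$. The $L^2$ part is at most $\sqrt T\sup_t\phi$. For the derivative, use the equation: pointwise,
\[
|\dot x-\dot{\bar x}|\le L_F\phi+c_3|u-\bar u|,
\]
so $\|\dot x-\dot{\bar x}\|_{L^2}\le\sqrt T\,(L_F\sup_t\phi+c_3|u-\bar u|)$. Combining these gives the claim with an explicit $M$ depending only on $T,N,R,R_0$ and the architecture through $C_{\mathcal N}$. The main point needing care is the dependence of $c_3$ on the a priori bound $R_x$. That dependence is exactly why the constant depends on $R_0$, so the uniform bound must be established before the Gronwall difference estimate.
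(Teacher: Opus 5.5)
Your proposal is correct and follows essentially the same route as the paper. Both arguments proceed in the same steps:
\begin{itemize}
\item an a priori Gronwall bound $R_x$ on the trajectory;
\item the split of $F(x,u)-F(\bar x,\bar u)$ into a state-Lipschitz part ($L_F$) and a parameter part controlled by \eqref{eq:NN3} along $\bar x$;
\item Gronwall applied to $|x-\bar x|$;
\item recovery of the derivative bound from the equation to obtain the $H^1$ estimate.
\end{itemize}
The only cosmetic difference is that you apply Gronwall to $1+|x|$, whereas the paper absorbs the constant term $TN^{3/2}C_{\mathcal N}$ into the initial value; the resulting bounds are equivalent.
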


\begin{proof}
\emph{A priori bound.} By \eqref{eq:NN2}, $|F(0,u)|\le N^{3/2}C_{\mathcal N}$, so $|F(y,\bar u)|\le N^{3/2}C_{\mathcal N}+L_F|y|$. Integrating the equation for $\bar x$ and applying \cref{lem:gronwall} gives
\[
|\bar x(t)|\le R_x:=\big(R_0+TN^{3/2}C_{\mathcal N}\big)e^{L_FT},\qquad t\in[0,T].
\]
\emph{Parameter perturbation.} By \eqref{eq:NN3},
\[
|F(\bar x(t),u)-F(\bar x(t),\bar u)|\le\sqrt N\sum_{i,j}C_{\mathcal N}\big(1+|\bar x_j(t)-\bar x_i(t)|\big)|u-\bar u|\le L_u|u-\bar u|,
\]
with $L_u:=N^{5/2}C_{\mathcal N}(1+2R_x)$.
\emph{Gronwall argument.} Let $e:=x-\bar x$. Then
\[
|\dot e(t)|=|F(x,u)-F(\bar x,\bar u)|\le|F(x,u)-F(\bar x,u)|+|F(\bar x,u)-F(\bar x,\bar u)|\le L_F|e(t)|+L_u|u-\bar u|.
\]
Integration and \cref{lem:gronwall} yield $|e(t)|\le\big(|e(0)|+TL_u|u-\bar u|\big)e^{L_FT}=:E$ for all $t\in[0,T]$, and consequently $|\dot e(t)|\le L_FE+L_u|u-\bar u|$. Therefore
\[
\|e\|^2_{H^1}=\int_0^T|e|^2\dd t+\int_0^T|\dot e|^2\dd t\le T\big(E^2+(L_FE+L_u|u-\bar u|)^2\big),
\]
and the claim follows with $M:=\sqrt{T}\,\big((1+L_F)(1+TL_u)e^{L_FT}+L_u\big)$.
\end{proof}

\begin{rmk}
\Cref{thm:continuousdependence} expresses robustness of the model: small errors in the measured initial positions or small changes of the network weights cause only proportionally small changes of the simulated trajectories, uniformly on the time horizon. Note that only the Lipschitz continuity of the activations is needed; in contrast to earlier formulations no boundedness of the activations is required.
\end{rmk}

\subsection{Existence of an optimal parameter set}

\begin{theor}[Existence of a minimiser]\label{thm:existence}
Let the assumptions of \cref{thm:wellposed} hold and let $z\in L^2(0,T;\R^{Nn})$. Then problem \eqref{E:3} admits at least one minimiser $u^*\in\Uad$.
\end{theor}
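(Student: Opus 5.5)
The plan is to apply the direct method of the calculus of variations, or more simply the Weierstrass extreme value theorem. The admissible set $\Uad=[-1,1]^K$ is a nonempty compact subset of the finite-dimensional space $\R^K$. It therefore suffices to show that the reduced cost $\hat J:\Uad\to\R$ is continuous, and in fact Lipschitz continuous; a minimiser $u^*\in\Uad$ then exists.

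First, I note that $\Uad\subset\cU_R$ with $R:=\sqrt K$, since $|u|\le\sqrt K$ for every $u\in[-1,1]^K$. By \cref{thm:wellposed}, each $u\in\Uad$ determines a unique state $x(u)\in C^1([0,T];\R^{Nn})\subset L^2(0,T;\R^{Nn})$. Hence $\hat J(u)=\tfrac12\|z-x(u)\|_{L^2}^2$ is finite and nonnegative, and $\hat J$ is well defined on $\Uad$.

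Second, I apply \cref{thm:continuousdependence} with the common initial datum $x_0=\bar x_0=z^0$ and $R_0:=|z^0|$. For all $u,\bar u\in\Uad$ this gives
\[
\|x(u)-x(\bar u)\|_{L^2(0,T)}\le\|x(u)-x(\bar u)\|_{H^1(0,T)}\le M|u-\bar u|.
\]
Thus the control-to-state map $u\mapsto x(u)$ is Lipschitz from $\Uad$ into $L^2(0,T;\R^{Nn})$. The map $x\mapsto\tfrac12\|z-x\|_{L^2}^2$ is continuous on $L^2$. To obtain an explicit estimate I would use
\[
|\hat J(u)-\hat J(\bar u)|\le\tfrac12\big(\|z-x(u)\|_{L^2}+\|z-x(\bar u)\|_{L^2}\big)\,\|x(u)-x(\bar u)\|_{L^2}.
\]
The bracket is uniformly bounded. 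This follows from $\|z\|_{L^2}<\infty$ together with the a priori bound $|x(u)(t)|\le R_x$ from the proof of \cref{thm:continuousdependence}, which gives $\|x(u)\|_{L^2}\le\sqrt T R_x$. Consequently $\hat J$ is Lipschitz on $\Uad$.

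Finally, a continuous real function on a compact set attains its infimum, which yields $u^*\in\Uad$ with $\hat J(u^*)=\min_{\Uad}\hat J$.

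There is no real obstacle. The only point needing care is that the constant $M$ in \cref{thm:continuousdependence} is uniform over all of $\Uad$, which is why $\Uad$ is embedded in a single ball $\cU_R$ before that theorem is invoked.
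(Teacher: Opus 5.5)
Your proposal is correct and follows essentially the same route as the paper: embed $\Uad$ in $\cU_{\sqrt K}$, use \cref{thm:continuousdependence} to get Lipschitz continuity of $u\mapsto x(u)$ into $L^2(0,T;\R^{Nn})$, combine this with continuity of $J$ on $L^2$, and conclude by compactness (the paper phrases this last step with a minimising sequence rather than citing Weierstrass directly). Your explicit Lipschitz bound on $\hat J$ is more than is needed but harmless. The only cosmetic fix is to take $R_0:=|z^0|+1$, since \cref{thm:continuousdependence} is stated for $R_0>0$.
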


\begin{proof}
Since $\Uad=[-1,1]^K$ is nonempty, closed and bounded in $\R^K$, it is compact. By \cref{thm:continuousdependence} with $R=\sqrt K$, the control-to-state map $u\mapsto x(u)$ is Lipschitz continuous from $\Uad$ into $H^1(0,T;\R^{Nn})$ and hence into $L^2(0,T;\R^{Nn})$. The functional $x\mapsto J(x;z)$ is continuous on $L^2$, so $\hat J$ is continuous on the compact set $\Uad$. Let $(u_k)\subset\Uad$ be a minimising sequence, $\hat J(u_k)\to m:=\inf_{\Uad}\hat J\ge0$. By compactness a subsequence converges to some $u^*\in\Uad$, and continuity gives $\hat J(u^*)=m$.
\end{proof}

\begin{rmk}\label{rmk:existence}
(i) Because the control space is finite dimensional, weak and strong convergence coincide; the weak-continuity assumptions on $u\mapsto W_u^{i,j}$ used in some infinite-dimensional formulations are automatically satisfied here. (ii) The minimiser need not be unique: permutations of hidden neurons and sign symmetries of odd activations leave $\mathcal N_u$ invariant, so $\hat J$ is in general non-convex with many equivalent minimisers. This is one of the motivations for the stochastic, adaptive optimisation scheme of \cref{sec:sgd}.
\end{rmk}

\subsection{Adjoint calculus and first-order optimality}

We first record the abstract adjoint representation of the reduced gradient.

\begin{prop}\label{prop:adjoint-gradient}
Let $Y,Z$ be real Hilbert spaces, $U=\R^K$, and let $e:Y\times U\to Z$ be continuously Fr\'echet differentiable. Suppose that for every $u\in U$ the state $S(u)\in Y$ is the unique solution of $e(S(u),u)=0$ and that $e_y(S(u),u)\in\mathcal L(Y,Z)$ is an isomorphism. Then $S$ is continuously Fr\'echet differentiable and the functional $\mathcal J(u):=\frac12\|z-S(u)\|_Y^2$, $z\in Y$, satisfies
\[
\nabla\mathcal J(u)=e_u(S(u),u)^*p(u),
\]
where $p(u)\in Z$ is the unique solution of the adjoint equation $e_y(S(u),u)^*p(u)=z-S(u)$.
\end{prop}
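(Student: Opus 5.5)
The plan is to apply the implicit function theorem to $e$ and then compute the gradient of $\mathcal J$ by the chain rule and the Riesz identification.

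\emph{Step 1: differentiability of $S$.} Fix $\bar u\in U$ and set $\bar y=S(\bar u)$. By hypothesis $e$ is $C^1$ near $(\bar y,\bar u)$, $e(\bar y,\bar u)=0$, and $e_y(\bar y,\bar u)$ is an isomorphism of $Y$ onto $Z$. The implicit function theorem in Banach spaces then gives neighbourhoods $V\ni\bar u$ and $W\ni\bar y$ and a $C^1$ map $\tilde S:V\to W$ such that, for $(y,u)\in W\times V$, the equation $e(y,u)=0$ holds exactly when $y=\tilde S(u)$.

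The step needing care is to identify $\tilde S$ with $S$ on $V$. Uniqueness of solutions of $e(\cdot,u)=0$ alone only shows $\tilde S(u)=S(u)$ when $S(u)\in W$. I will bypass continuity of $S$ as follows: $\tilde S(u)$ is a solution of $e(y,u)=0$, and by the global uniqueness assumption the only solution is $S(u)$. Hence $S=\tilde S$ on $V$, so $S$ is $C^1$. Differentiating $e(S(u),u)=0$ gives
\[
e_y\,S'(u)+e_u=0,\qquad\text{so}\qquad S'(u)=-e_y^{-1}e_u,
\]
with all operators evaluated at $(S(u),u)$.

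\emph{Step 2: the gradient.} The map $y\mapsto\frac12\|z-y\|_Y^2$ is $C^1$ with derivative $h\mapsto -(z-y,h)_Y$. By the chain rule $\mathcal J$ is $C^1$, and for $h\in\R^K$
\[
\mathcal J'(u)h=-(z-S(u),S'(u)h)_Y=(z-S(u),\,e_y^{-1}e_uh)_Y.
\]

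\emph{Step 3: the adjoint state.} Since $e_y$ is an isomorphism, so is its Hilbert adjoint $e_y^*:Z\to Y$, with $(e_y^*)^{-1}=(e_y^{-1})^*$. Therefore $p(u):=(e_y^*)^{-1}(z-S(u))\in Z$ exists and is the unique solution of $e_y(S(u),u)^*p(u)=z-S(u)$. Substituting into Step 2,
\[
\mathcal J'(u)h=(e_y^*p,\,e_y^{-1}e_uh)_Y=(p,\,e_uh)_Z=(e_u^*p,\,h)_{\R^K}.
\]
Here $e_u^*:Z\to\R^K$ is the adjoint with respect to the Euclidean inner product on $U$. Riesz identification in $\R^K$ then gives $\nabla\mathcal J(u)=e_u(S(u),u)^*p(u)$.

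The only delicate point is Step 1, namely the global-versus-local identification of $S$ with the implicit function; the global uniqueness hypothesis resolves it as described. Everything else is routine bookkeeping with adjoints.
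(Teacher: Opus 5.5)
Your proof is correct and follows essentially the paper's route: the implicit function theorem gives $S$, the chain rule gives $\mathcal J'(u)$, and substituting $S'(u)=-e_y^{-1}e_u$ and $p=(e_y^*)^{-1}(z-S(u))$ yields $e_u^*p$. The differences are minor. You do the substitution inside the inner product instead of forming $S'(u)^*=-e_u^*(e_y^*)^{-1}$ explicitly. You also correctly spell out, via global uniqueness, why the local implicit function coincides with $S$, a point the paper leaves implicit.
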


\begin{proof}
Differentiability of $S$ follows from the implicit function theorem. Let $s\in U$ be arbitrary. Since
\[
\mathcal J(u)=\frac12\langle z-S(u),z-S(u)\rangle_Y,
\]
the chain rule gives
\[
\mathcal J'(u)[s]=\langle S(u)-z,S'(u)[s]\rangle_Y.
\]
By definition of the Hilbert-space adjoint of $S'(u):U\to Y$,
\[
\langle S'(u)[s],\varphi\rangle_Y=\langle s,S'(u)^*\varphi\rangle_U\qquad\text{for all }s\in U,\ \varphi\in Y.
\]
Taking $\varphi=S(u)-z$ we obtain $\mathcal J'(u)[s]=\langle s,S'(u)^*(S(u)-z)\rangle_U$, and hence
\begin{equation}\label{eq:reduced-gradient}
\nabla\mathcal J(u)=S'(u)^*(S(u)-z).
\end{equation}
Differentiating the identity $e(S(u),u)=0$ in the direction $s$ yields
\[
e_y(S(u),u)S'(u)[s]+e_u(S(u),u)[s]=0,
\]
and therefore
\[
S'(u)=-e_y(S(u),u)^{-1}e_u(S(u),u),\qquad S'(u)^*=-e_u(S(u),u)^*\big(e_y(S(u),u)^*\big)^{-1}.
\]
Since $e_y(S(u),u)$ is an isomorphism, so is its adjoint, and the adjoint state
\[
p(u)=\big(e_y(S(u),u)^*\big)^{-1}(z-S(u))
\]
is well defined. Substituting into \eqref{eq:reduced-gradient} gives
\[
\begin{aligned}
\nabla\mathcal J(u)&=-e_u(S(u),u)^*\big(e_y(S(u),u)^*\big)^{-1}(S(u)-z)\\
&=e_u(S(u),u)^*\big(e_y(S(u),u)^*\big)^{-1}(z-S(u))=e_u(S(u),u)^*p(u).
\end{aligned}
\]
Consequently $\mathcal J'(u)[s]=\langle e_u(S(u),u)^*p(u),s\rangle_U$ for every direction $s\in U$.
\end{proof}

For the particle system \eqref{eq:ode} the adjoint equation can be written down explicitly. We use the notation
\[
A(t):=D_xF(x(t),u)\in\R^{Nn\times Nn},\qquad B(t):=D_uF(x(t),u)\in\R^{Nn\times K},
\]
so that, with $DW^{i,j}_u$ denoting the Jacobian of $W^{i,j}_u$ with respect to its argument and $\partial_uW^{i,j}_u$ the Jacobian with respect to the parameters,
\[
[A(t)h]_i=\sum_{j=1}^N DW^{i,j}_u\big(x_j-x_i\big)(h_j-h_i),\qquad [B(t)q]_i=\sum_{j=1}^N\partial_uW^{i,j}_u\big(x_j-x_i\big)q.
\]

\begin{theor}[Adjoint system and gradient]\label{thm:gradient}
Let $g^{(l)}\in C^1(\R)$ with $(g^{(l)})'$ bounded and locally Lipschitz for $l=2,\dots,L$, let $u\in\R^K$ and $x=x(u)$. Let $\lambda\in H^1(0,T;\R^{Nn})$ be the unique solution of the backward adjoint equation
\begin{equation}\label{eq:adjoint}
-\dot\lambda_k=\sum_{i=1}^N DW^{i,k}_u(x_k-x_i)^{\top}\lambda_i-\sum_{j=1}^N DW^{k,j}_u(x_j-x_k)^{\top}\lambda_k+(x_k-z_k),\qquad\lambda_k(T)=0,
\end{equation}
$k=1,\dots,N$, that is, $-\dot\lambda=A(t)^\top\lambda+(x-z)$, $\lambda(T)=0$. Then $\hat J$ is continuously differentiable and
\begin{equation}\label{eq:gradient}
\nabla\hat J(u)=\int_0^TB(t)^\top\lambda(t)\dd t=\int_0^T\sum_{i=1}^N\sum_{j=1}^N\partial_uW^{i,j}_u\big(x_j(t)-x_i(t)\big)^{\top}\lambda_i(t)\dd t.
\end{equation}
\end{theor}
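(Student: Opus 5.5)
We cast \eqref{eq:ode} in the abstract framework of \cref{prop:adjoint-gradient} and then identify the abstract adjoint with the backward ODE \eqref{eq:adjoint}.

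\textbf{Functional setting.} Take $Y=H^1(0,T;\R^{Nn})$, $Z=L^2(0,T;\R^{Nn})\times\R^{Nn}$ and $U=\R^K$, and define
\[
e(x,u)=\big(\dot x-F(x,u),\,x(0)-z^0\big).
\]
Since the cost is measured in $L^2$ rather than in the $H^1$ inner product, we will not apply \cref{prop:adjoint-gradient} verbatim. Instead we repeat its short argument with the $L^2$ pairing. That argument only needs the following ingredients.
\begin{enumerate}[label=(\roman*)]
\item $e$ is $C^1$. By \cref{lem:NN}, $(r,u)\mapsto W^{i,j}_u(r)$ is $C^1$ with locally Lipschitz derivative. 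Because $H^1\hookrightarrow C([0,T])$, trajectories stay in a compact set, on which $D_xF$ and $D_uF$ are bounded and uniformly continuous. A standard Nemytskii argument then shows that $x\mapsto F(x,u)$ is continuously Fr\'echet differentiable from $C([0,T])$, and hence from $H^1$, into $L^2$, with derivative $(h,q)\mapsto A(t)h+B(t)q$. Continuity of the derivative in $(x,u)$ follows from the local Lipschitz property of the derivative. This is the step I expect to need the most care.
\item Unique solvability of $e(x,u)=0$ for every $u$. This is \cref{thm:wellposed}.
\item $e_y(x(u),u)h=(\dot h-A(t)h,\,h(0))$ is an isomorphism $Y\to Z$. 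Here $A\in L^\infty$, indeed continuous, so \cref{thm:Cara} gives existence and uniqueness of $h$ for every $(r,r_0)\in Z$. The Gronwall bound of \cref{lem:gronwall} gives $\|h\|_{H^1}\le C(\|r\|_{L^2}+|r_0|)$, so the inverse is bounded.
\end{enumerate}
The implicit function theorem then shows that $S:u\mapsto x(u)$ is $C^1$. Its derivative $S'(u)q=h$ solves the sensitivity equation $\dot h=Ah+Bq$, $h(0)=0$.

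\textbf{Gradient via the adjoint.} By the chain rule,
\[
\hat J'(u)[q]=\int_0^T\langle x-z,h\rangle\dd t.
\]
Let $\lambda$ solve $-\dot\lambda=A^\top\lambda+(x-z)$, $\lambda(T)=0$. This problem is well posed in $H^1$ by \cref{thm:Cara} after the time reversal $t\mapsto T-t$, since $A^\top\in L^\infty$ and $x-z\in L^2$. Substituting $x-z=-\dot\lambda-A^\top\lambda$ gives
\[
\hat J'(u)[q]=\int_0^T\langle -\dot\lambda-A^\top\lambda,h\rangle\dd t.
\]
We integrate by parts, which is legitimate for $H^1$ functions since their product is absolutely continuous. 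The boundary terms vanish because $h(0)=0$ and $\lambda(T)=0$, so
\[
\hat J'(u)[q]=\int_0^T\langle\lambda,\dot h-Ah\rangle\dd t=\int_0^T\langle\lambda,Bq\rangle\dd t=\Big\langle\int_0^TB^\top\lambda\dd t,\,q\Big\rangle.
\]
This is \eqref{eq:gradient}. Continuity of $u\mapsto\nabla\hat J(u)$ follows from continuity of $S$, of $A$ and $B$ along trajectories, and of the adjoint solution map, the latter again by a Gronwall estimate.

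\textbf{Componentwise form.} It remains to compute $A^\top$ explicitly. We have
\[
\langle\lambda,Ah\rangle=\sum_{i,j}\langle\lambda_i,DW^{i,j}_u(x_j-x_i)(h_j-h_i)\rangle.
\]
For a fixed index $k$, the coefficient of $h_k$ collects two contributions:
\begin{itemize}
\item the terms with $j=k$, giving $\sum_iDW^{i,k}_u(x_k-x_i)^\top\lambda_i$;
\item the terms with $i=k$, which carry $-h_k$ and give $-\sum_jDW^{k,j}_u(x_j-x_k)^\top\lambda_k$.
\end{itemize}
Together these yield exactly \eqref{eq:adjoint}. The corresponding expression for $B^\top\lambda$ follows by direct transposition.
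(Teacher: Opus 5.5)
Your proof is correct and follows the paper's argument. You use the sensitivity equation $\dot h=Ah+Bq$, $h(0)=0$, substitute $x-z=-\dot\lambda-A^\top\lambda$, integrate by parts using $h(0)=0$ and $\lambda(T)=0$, and collect the coefficients of $h_k$ in $\langle\lambda,Ah\rangle$; beyond that, you explicitly justify the differentiability of the control-to-state map via the Nemytskii argument and the implicit function theorem, which the paper's proof only asserts (these are the same ingredients it checks in (A2) and (A4) of \cref{thm:optimality}), and you rightly note that \cref{prop:adjoint-gradient} cannot be applied verbatim because the cost uses the $L^2$ rather than the $H^1$ inner product.
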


\begin{proof}
By \cref{lem:NN}, $F$ is $C^1$ with $A,B$ continuous in $t$, so \eqref{eq:adjoint} is a linear ODE with continuous coefficients and has a unique solution (\cref{thm:Cara}, applied backward in time). By the differentiability of the control-to-state map, for $q\in\R^K$ the sensitivity $\eta:=x'(u)[q]$ solves
\[
\dot\eta=A(t)\eta+B(t)q,\qquad\eta(0)=0.
\]
Using \eqref{eq:adjoint}, integration by parts, and $\eta(0)=0$, $\lambda(T)=0$, we obtain
\[
\begin{aligned}
\hat J'(u)[q]&=\int_0^T\langle x-z,\eta\rangle\dd t=\int_0^T\langle-\dot\lambda-A^\top\lambda,\eta\rangle\dd t\\
&=-\big[\langle\lambda,\eta\rangle\big]_0^T+\int_0^T\langle\lambda,\dot\eta\rangle\dd t-\int_0^T\langle\lambda,A\eta\rangle\dd t\\
&=\int_0^T\langle\lambda,\dot\eta-A\eta\rangle\dd t=\int_0^T\langle\lambda,Bq\rangle\dd t=\Big\langle\int_0^TB^\top\lambda\dd t,\,q\Big\rangle.
\end{aligned}
\]
Finally, for $h=(h_1,\dots,h_N)$,
\[
\langle\lambda,Ah\rangle=\sum_{i=1}^N\sum_{j=1}^N\big\langle\lambda_i,DW^{i,j}_u(x_j-x_i)(h_j-h_i)\big\rangle,
\]
and collecting the coefficients of $h_k$ (the terms with $j=k$ and with $i=k$, respectively) gives the componentwise form of $A^\top\lambda$ in \eqref{eq:adjoint}.
\end{proof}

\begin{theor}[First-order optimality system]\label{thm:optimality}
Let the assumptions of \cref{thm:gradient} hold and let $\bar u\in\Uad$ be a local solution of \eqref{E:3} with state $\bar x=x(\bar u)$. Then there exists a unique adjoint state $\bar\lambda\in H^1(0,T;\R^{Nn})$ such that
\begin{align}
&\dot{\bar x}_i=\sum_{j=1}^NW^{i,j}_{\bar u}(\bar x_j-\bar x_i),\qquad\bar x_i(0)=z^0_i,\label{eq:OS1}\\
&-\dot{\bar\lambda}_k=\sum_{i=1}^N DW^{i,k}_{\bar u}(\bar x_k-\bar x_i)^{\top}\bar\lambda_i-\sum_{j=1}^N DW^{k,j}_{\bar u}(\bar x_j-\bar x_k)^{\top}\bar\lambda_k+(\bar x_k-z_k),\qquad\bar\lambda_k(T)=0,\label{eq:OS2}\\
&\Big\langle\int_0^T\sum_{i=1}^N\sum_{j=1}^N\partial_uW^{i,j}_{\bar u}(\bar x_j-\bar x_i)^{\top}\bar\lambda_i\dd t,\ u-\bar u\Big\rangle\ge0\qquad\forall u\in\Uad.\label{eq:OS3}
\end{align}
Equivalently, \eqref{eq:OS3} states $\bar u=P_{\Uad}\big(\bar u-\gamma\nabla\hat J(\bar u)\big)$ for every $\gamma>0$, where $P_{\Uad}$ is the componentwise projection onto $[-1,1]$.
\end{theor}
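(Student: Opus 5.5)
The plan is to reduce the statement to the reduced-gradient formula of \cref{thm:gradient} combined with the elementary first-order condition for minimising a $C^1$ function over a closed convex set in $\R^K$. Equation \eqref{eq:OS1} is simply the state equation, solved uniquely by \cref{thm:wellposed}. For \eqref{eq:OS2} we define $\bar\lambda$ as the unique solution of the backward adjoint equation \eqref{eq:adjoint} along $(\bar x,\bar u)$. Existence and uniqueness in $H^1(0,T;\R^{Nn})$ follow from \cref{thm:Cara} after the time reversal $s=T-t$. This applies because $A(t)=D_xF(\bar x(t),\bar u)$ is continuous, hence bounded on $[0,T]$, and $\bar x-z\in L^2$. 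The componentwise form is the transpose computation already carried out at the end of the proof of \cref{thm:gradient}.

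For the variational inequality \eqref{eq:OS3}, take $u\in\Uad$. Convexity of $\Uad=[-1,1]^K$ gives $\bar u+\tau(u-\bar u)\in\Uad$ for $\tau\in[0,1]$. Local optimality of $\bar u$ then yields $\hat J(\bar u+\tau(u-\bar u))-\hat J(\bar u)\ge0$ for all small $\tau>0$. Dividing by $\tau$ and letting $\tau\downarrow0$, the continuous differentiability of $\hat J$ from \cref{thm:gradient} gives $\langle\nabla\hat J(\bar u),u-\bar u\rangle\ge0$. Inserting \eqref{eq:gradient} gives exactly \eqref{eq:OS3}.

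One could instead invoke \cref{lem:hinze} with $Y=H^1$, $Z=L^2\times\R^{Nn}$ and $e(x,u)=(\dot x-F(x,u),x(0)-z^0)$. That route would require checking (A1)--(A4): that $e$ is $C^1$, using the locally Lipschitz derivative from \cref{lem:NN}, and that $e_y$ is invertible, by \cref{thm:Cara}. We would also have to identify the abstract multiplier in $Z^*$ with the function $\bar\lambda$. The direct route avoids all of this, since \cref{thm:gradient} already contains the needed differentiability. The one point to be careful about there is that \cref{thm:gradient} asserts the differentiability of the control-to-state map into $C([0,T])$ or $L^2$; this is the step I would regard as the main obstacle if one insisted on full rigor. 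I would justify it through the implicit function theorem, i.e.\ \cref{prop:adjoint-gradient} with the operator $e$ above. The only regularity needed is that $F$ is $C^1$ with derivatives bounded on bounded sets, and the a priori bound in \cref{thm:continuousdependence} keeps trajectories in such a set.

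Finally, the projection characterisation is a standard fact for closed convex sets. The inequality $\langle \bar u-(\bar u-\gamma g),\,u-\bar u\rangle\ge0$ for all $u\in\Uad$, with $g=\nabla\hat J(\bar u)$, is the characterisation of $\bar u=P_{\Uad}(\bar u-\gamma g)$ as the metric projection. This inequality equals $\gamma\langle g,u-\bar u\rangle\ge0$, which is equivalent to \eqref{eq:OS3} for any $\gamma>0$. Because $\Uad$ is a box, this projection is the componentwise clipping to $[-1,1]$. Uniqueness of $\bar\lambda$ is inherited from the uniqueness in the linear adjoint equation.
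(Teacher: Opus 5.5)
Your argument is correct, but it takes a different route from the paper.

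\textbf{The paper's route.} The paper applies the abstract multiplier rule \cref{lem:hinze} with $Y=H^1(0,T;\R^{Nn})$, $Z=L^2(0,T;\R^{Nn})\times\R^{Nn}$ and $e(x,u)=(\dot x-F(x,u),x(0)-z^0)$. It verifies (A1)--(A4), including the explicit bound $\|h\|_{H^1}\le C(|r_0|+\|r\|_{L^2})$ for the linearised equation. This yields a multiplier $(\bar p,\eta)\in Z^*\cong L^2\times\R^{Nn}$, which is then upgraded in two steps. Testing the weak adjoint identity with $h\in C^\infty_c$ gives $\bar p\in H^1$, and integrating by parts gives $\bar p(T)=0$ and $\eta=\bar p(0)$. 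Only then does it set $\bar\lambda:=-\bar p$.

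\textbf{Your route.} You define $\bar\lambda$ directly as the solution of the backward linear ODE and cite the reduced gradient \eqref{eq:gradient} from \cref{thm:gradient}. You then use the elementary first-order condition for a differentiable function on a convex subset of $\R^K$. This is shorter and needs no identification of an abstract multiplier, and your sign bookkeeping in the projection characterisation is right.

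\textbf{What the paper's route buys.} It does not depend on the one step in the proof of \cref{thm:gradient} that is only asserted, namely the differentiability of $u\mapsto x(u)$. Checking (A2) and (A4) is exactly what the implicit function theorem in \cref{prop:adjoint-gradient} needs. The abstract multiplier also produces the weak form of \cref{rmk:weakform}.

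So your claim that the direct route ``avoids all of this'' is a little optimistic. As you concede, a fully rigorous version moves the same $C^1$ and invertibility checks into the justification of \cref{thm:gradient} rather than eliminating them. Since that theorem precedes this one, citing it is legitimate and there is no gap.
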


\begin{proof}
We verify the assumptions of \cref{lem:hinze} with
\[
Y:=H^1(0,T;\R^{Nn}),\qquad U:=\R^K,\qquad Z:=L^2(0,T;\R^{Nn})\times\R^{Nn},
\]
\[
e(x,u):=\big(\dot x-F(x,u),\ x(0)-z^0\big),
\]
and $J(x,u):=\frac12\int_0^T|x-z|^2\dd t$.

(A1) $\Uad=[-1,1]^K$ is nonempty ($0\in\Uad$), convex and closed.

(A2) Since $J$ has no explicit dependence on $u$, we have $J_u=0$. To compute the derivative with respect to the state, let $h\in Y$ and $\epsilon\ne0$. Expanding the square,
\[
J(x+\epsilon h,u)=\frac12\int_0^T\Big(|x-z|^2+2\epsilon\langle x-z,h\rangle+\epsilon^2|h|^2\Big)\dd t,
\]
so that
\[
\frac{J(x+\epsilon h,u)-J(x,u)}{\epsilon}=\int_0^T\langle x-z,h\rangle\dd t+\frac\epsilon2\int_0^T|h|^2\dd t.
\]
Letting $\epsilon\to0$ gives
\[
J_x(x,u)[h]=\int_0^T\langle x-z,h\rangle\dd t=\sum_{i=1}^N\int_0^T\langle x_i(t)-z_i(t),h_i(t)\rangle\dd t,
\]
a bounded linear functional depending continuously on $x$. By \cref{lem:NN}, $F$ is $C^1$ with locally Lipschitz derivative; since $H^1(0,T)\hookrightarrow C([0,T])$, the superposition operator $x\mapsto F(x(\cdot),u)$ is continuously Fr\'echet differentiable from $Y\times U$ into $L^2(0,T;\R^{Nn})$. Hence $e$ is continuously Fr\'echet differentiable with
\[
e_x(x,u)[h]=\big(\dot h-D_xF(x,u)h,\ h(0)\big),\qquad e_u(x,u)[q]=\big(-D_uF(x,u)q,\ 0\big).
\]
Componentwise, since the derivative of the linear map $x\mapsto x_j-x_i$ in the direction $h$ is $h_j-h_i$, the chain rule gives
\[
\big[e_x(x,u)[h]\big]_i=\dot h_i-\sum_{j=1}^NDW^{i,j}_u(x_j-x_i)(h_j-h_i),\qquad
\big[e_u(x,u)[q]\big]_i=-\sum_{j=1}^N\partial_uW^{i,j}_u(x_j-x_i)\,q.
\]

(A3) By \cref{thm:wellposed}, $e(x,u)=0$ has a unique solution $x(u)\in C^1([0,T];\R^{Nn})\subset Y$ for every $u\in\R^K$.

(A4) Let $(r,r_0)\in Z$ be arbitrary and consider $e_x(x(u),u)[h]=(r,r_0)$. This is equivalent to the linear initial-value problem
\[
\dot h(t)=A(t)h(t)+r(t),\qquad h(0)=r_0,\qquad A(t):=D_xF(x(u,t),u).
\]
Since $x(u)$ is bounded on $[0,T]$ and $DW^{i,j}_u$ is bounded on bounded sets, there is $C_A>0$ with $|A(t)|\le C_A$ for all $t\in[0,T]$, and \cref{thm:Cara} provides a unique solution $h\in Y$. Moreover,
\[
|h(t)|\le|r_0|+\int_0^t|r(s)|\dd s+C_A\int_0^t|h(s)|\dd s.
\]
By \cref{lem:gronwall} and the Cauchy--Schwarz inequality,
\[
|h(t)|\le\Big(|r_0|+\int_0^T|r(s)|\dd s\Big)e^{C_AT}\le\big(|r_0|+\sqrt T\,\|r\|_{L^2(0,T)}\big)e^{C_AT},
\]
whence $\|h\|_{L^2(0,T)}\le C\big(|r_0|+\|r\|_{L^2(0,T)}\big)$. From $\dot h=Ah+r$ we also obtain
\[
\|\dot h\|_{L^2(0,T)}\le C_A\|h\|_{L^2(0,T)}+\|r\|_{L^2(0,T)}.
\]
Consequently,
\[
\|h\|_{H^1(0,T)}\le C\big(|r_0|+\|r\|_{L^2(0,T)}\big),
\]
which shows that $e_x(x(u),u)^{-1}$ exists and is bounded.

\Cref{lem:hinze} therefore yields $(\bar p,\eta)\in Z^*\cong L^2(0,T;\R^{Nn})\times\R^{Nn}$ with $e_x(\bar x,\bar u)^*(\bar p,\eta)=-J_x(\bar x,\bar u)$, that is,
\begin{equation}\label{eq:weakadj}
\int_0^T\big\langle\bar p,\ \dot h-A(t)h\big\rangle\dd t+\langle\eta,h(0)\rangle=-\int_0^T\langle\bar x-z,h\rangle\dd t\qquad\forall h\in H^1(0,T;\R^{Nn}).
\end{equation}
Choosing $h\in C^\infty_c((0,T);\R^{Nn})$ shows that $\bar p$ has the weak derivative $\dot{\bar p}=(\bar x-z)-A^\top\bar p\in L^2$, hence $\bar p\in H^1$. Integrating by parts in \eqref{eq:weakadj} for general $h$ then gives $\langle\bar p(T),h(T)\rangle+\langle\eta-\bar p(0),h(0)\rangle=0$, so $\bar p(T)=0$ and $\eta=\bar p(0)$. Setting $\bar\lambda:=-\bar p$ we obtain \eqref{eq:OS2}; uniqueness follows from linearity. Finally, since $J_u=0$, the variational inequality of \cref{lem:hinze} reduces to
\[
\big\langle e_u(\bar x,\bar u)^*(\bar p,\eta),\,u-\bar u\big\rangle\ge0\qquad\forall u\in\Uad.
\]
Using the expression for $e_u$, this reads
\[
-\int_0^T\sum_{i=1}^N\sum_{j=1}^N\big\langle\bar p_i(t),\partial_uW^{i,j}_{\bar u}(\bar x_j(t)-\bar x_i(t))[u-\bar u]\big\rangle\dd t\ge0\qquad\forall u\in\Uad,
\]
which is \eqref{eq:OS3} after substituting $\bar p=-\bar\lambda$. The projection characterisation is the standard equivalent form of a variational inequality over a closed convex set.
\end{proof}

\begin{rmk}[Weak form of the optimality system]\label{rmk:weakform}
Multiplying \eqref{eq:weakadj} by $-1$ and writing $\zeta:=-\eta=\bar\lambda(0)$, the adjoint equation \eqref{eq:OS2} can equivalently be stated in the weak form used in \cite{GottlichTotzeck2022}: for all $h=(h_1,\dots,h_N)\in H^1(0,T;\R^{Nn})$,
\[
\int_0^T\sum_{i=1}^N\Big\langle\bar\lambda_i,\ \dot h_i-\sum_{j=1}^NDW^{i,j}_{\bar u}(\bar x_j-\bar x_i)(h_j-h_i)\Big\rangle\dd t+\langle\zeta,h(0)\rangle=\int_0^T\sum_{i=1}^N\langle\bar x_i-z_i,h_i\rangle\dd t.
\]
If the adjoint is instead defined with the opposite sign, $\bar p=-\bar\lambda$, as in the proof above, the variational inequality \eqref{eq:OS3} takes the equivalent form
\[
\Big\langle\int_0^T\sum_{i=1}^N\sum_{j=1}^N\partial_uW^{i,j}_{\bar u}(\bar x_j-\bar x_i)^{\top}\bar p_i\dd t,\ \bar u-u\Big\rangle\ge0\qquad\forall u\in\Uad.
\]
\end{rmk}

\begin{rmk}
\Cref{thm:existence,thm:optimality} together establish the well-posedness of the optimisation problem and its first-order optimality conditions. In words: under the weak regularity assumptions above, the parameter identification problem has at least one optimal parameter set, and every such set is characterised by the coupled forward state equation, backward adjoint equation, and variational inequality.
\end{rmk}

\begin{rmk}\label{rmk:cost}
By \cref{thm:gradient}, the full gradient $\nabla\hat J(u)\in\R^K$ is obtained from one forward solve of \eqref{eq:ode} and one backward solve of \eqref{eq:adjoint}, followed by a quadrature in time. The cost is independent of $K$, whereas finite-difference approximations would require $K+1$ forward solves. For all-to-all interactions each right-hand-side evaluation costs $\mathcal O(N^2)$ network evaluations, and $\mathcal O(N)$ for nearest-neighbour (follow-the-leader) interactions.
\end{rmk}

\begin{rmk}
The sign convention $\lambda=-\bar p$ is chosen so that $\nabla\hat J$ is given by \eqref{eq:gradient} with a plus sign and the optimality condition takes the familiar form $\langle\nabla\hat J(\bar u),u-\bar u\rangle\ge0$. In the interior of $\Uad$ it reduces to $\nabla\hat J(\bar u)=0$.
\end{rmk}

%======================================================================
\section{Adjoint-based stochastic gradient descent}\label{sec:sgd}
%======================================================================

With the theoretical foundation of \cref{sec:ocp} in place, we now introduce an implementation of stochastic gradient descent with adaptive learning rates, based on the methodology of \cite{GottlichTotzeck2022}. The primary goal is to demonstrate an approach to parameter calibration for neural-network-driven models, using interaction modelling within pedestrian and traffic flow systems, and to compare the optimal cost attained by each model. Our code employs a mini-batch, ADADELTA-based update rule combined with variance adaptation strategies, with the aim of optimising the network parameters efficiently while dealing with noisy gradient estimates.

It is very common for the cost functionals of applications to be non-convex, i.e., to possess multiple local minima (\cref{rmk:existence}). When working with large data sets and complex models, it can be challenging for gradient descent algorithms to move on from a local minimum in order to find the global minimum, and evaluating the full gradient in every step is expensive. Techniques such as mini-batch stochastic gradient descent \cite{RobbinsMonro1951,BottouCurtisNocedal2018} have been developed in response. Let the data consist of $m$ reference trajectories (or time slices of a trajectory) $z^{(1)},\dots,z^{(m)}$ with associated costs $\hat J^{(\ell)}$ and $\hat J=\frac1m\sum_\ell\hat J^{(\ell)}$. For a randomly drawn mini-batch $\mathcal B_k\subset\{1,\dots,m\}$ of size $\tilde m$ the estimator
\[
\nabla\hat J^{\tilde m}_k(u):=\frac{1}{\tilde m}\sum_{\ell\in\mathcal B_k}\nabla\hat J^{(\ell)}(u)
\]
is unbiased for $\nabla\hat J(u)$, and each summand is computed by \cref{thm:gradient}. The process works as follows:
\begin{enumerate}[leftmargin=2em,itemsep=0.1em]
\item Start with an initial parameter estimate.
\item Randomly select a mini-batch at each iteration.
\item Calculate the gradient of the cost functional on the selected mini-batch.
\item Update the parameters.
\item Repeat the previous steps until completion, which can take the form of a maximal number of iterations or stability of the cost functional.
\end{enumerate}

Ideally, the learning rate should update automatically during gradient descent, in order to attain faster convergence and to avoid the time and complication of choosing a suitable fixed learning rate. To fine-tune our results dynamically we therefore use the adaptive scheme ADADELTA \cite{Zeiler2012}, which has previously been used for pedestrian interaction networks \cite{GottlichKnapp2020} and uses previous gradients to adjust the learning rate during training. All operations below are componentwise. With decay rate $\rho\in(0,1)$, a small constant $\varepsilon>0$, $\mathrm{RMS}[y]_k:=\sqrt{E[y^2]_k+\varepsilon}$, and the initialisation $E[g^2]_0=0$, $E[\Delta u^2]_0=0$, the iteration reads, for $k=1,2,\dots$,
\begin{align}
g_k&=\nabla\hat J^{\tilde m}_k(u_k),\\
E[g^2]_k&=\rho\,E[g^2]_{k-1}+(1-\rho)\,g_k^2,\\
\Delta u_k&=-\frac{\mathrm{RMS}[\Delta u]_{k-1}}{\mathrm{RMS}[g]_k}\,g_k,\\
E[\Delta u^2]_k&=\rho\,E[\Delta u^2]_{k-1}+(1-\rho)\,\Delta u_k^2,\\
u_{k+1}&=P_{\Uad}\big(u_k+\Delta u_k\big).
\end{align}
Here $u_k$ denotes the $k$-th iterate and $\Delta u_k=u_{k+1}-u_k$ the increment between two consecutive parameter estimates (before projection), i.e., the step change of the parameters in the $k$-th iteration. The averaged squared gradients $E[g^2]$ and the averaged squared parameter updates $E[\Delta u^2]$ are initialised with zero at iteration $0$ to begin the process.

The second line is the running average of the squared gradients at iteration $k$. The decay rate $\rho$ determines how much weight is assigned to previous estimates, and $g_k$ is the mini-batch gradient of the cost functional at the current iterate. The gradient is squared componentwise to measure its magnitude, which yields a rolling average of squared gradients.

The third line defines how the parameters are updated. In the ratio, the numerator $\mathrm{RMS}[\Delta u]_{k-1}$ acts as a scale that measures the step size through the past squared updates and gives the update the correct units, which removes the need for a global learning rate. The denominator $\mathrm{RMS}[g]_k$ normalises the gradient and reduces noise that could otherwise throw off the results. The ratio is multiplied by the current gradient estimate. The overall purpose is an adaptive step size that improves the convergence of the stochastic gradient descent.

The fourth line updates the running average of the squared parameter changes with the squared update $\Delta u_k^2$ and the decay factor $\rho$, so that the algorithm keeps a record of the magnitudes of previous parameter updates. Finally, the projection $P_{\Uad}$ keeps the iterates admissible and is motivated by the projection form of \cref{thm:optimality}. The complete method is summarised in \cref{alg:sgd}. In all experiments we use $\rho=0.95$, $\varepsilon=10^{-6}$ and $12{,}000$ iterations.

\begin{algorithm}[t]
\caption{Adjoint-based projected mini-batch ADADELTA}\label{alg:sgd}
\begin{algorithmic}[1]
\Require data $z^{(1)},\dots,z^{(m)}$, initial guess $u_1\in\Uad$, batch size $\tilde m$, $\rho\in(0,1)$, $\varepsilon>0$, $k_{\max}$
\State $E[g^2]_0\gets0$, $E[\Delta u^2]_0\gets0$
\For{$k=1,2,\dots,k_{\max}$}
  \State draw a mini-batch $\mathcal B_k\subset\{1,\dots,m\}$ with $|\mathcal B_k|=\tilde m$
  \For{$\ell\in\mathcal B_k$}
     \State solve the state equation \eqref{eq:ode} forward on $[0,T]$ with $u_k$ and initial data $z^{(\ell)}(0)$
     \State solve the adjoint equation \eqref{eq:adjoint} backward on $[0,T]$ with $\lambda(T)=0$
     \State evaluate $\nabla\hat J^{(\ell)}(u_k)$ by quadrature of \eqref{eq:gradient}
  \EndFor
  \State $g_k\gets\frac1{\tilde m}\sum_{\ell\in\mathcal B_k}\nabla\hat J^{(\ell)}(u_k)$
  \State $E[g^2]_k\gets\rho E[g^2]_{k-1}+(1-\rho)g_k^2$; \quad $\Delta u_k\gets-\sqrt{E[\Delta u^2]_{k-1}+\varepsilon}\,\big/\sqrt{E[g^2]_k+\varepsilon}\;g_k$
  \State $E[\Delta u^2]_k\gets\rho E[\Delta u^2]_{k-1}+(1-\rho)\Delta u_k^2$; \quad $u_{k+1}\gets P_{\Uad}(u_k+\Delta u_k)$
\EndFor
\State \Return $u_{k_{\max}+1}$
\end{algorithmic}
\end{algorithm}

\begin{figure}[t]
    \centering
    \includegraphics[width=0.8\linewidth]{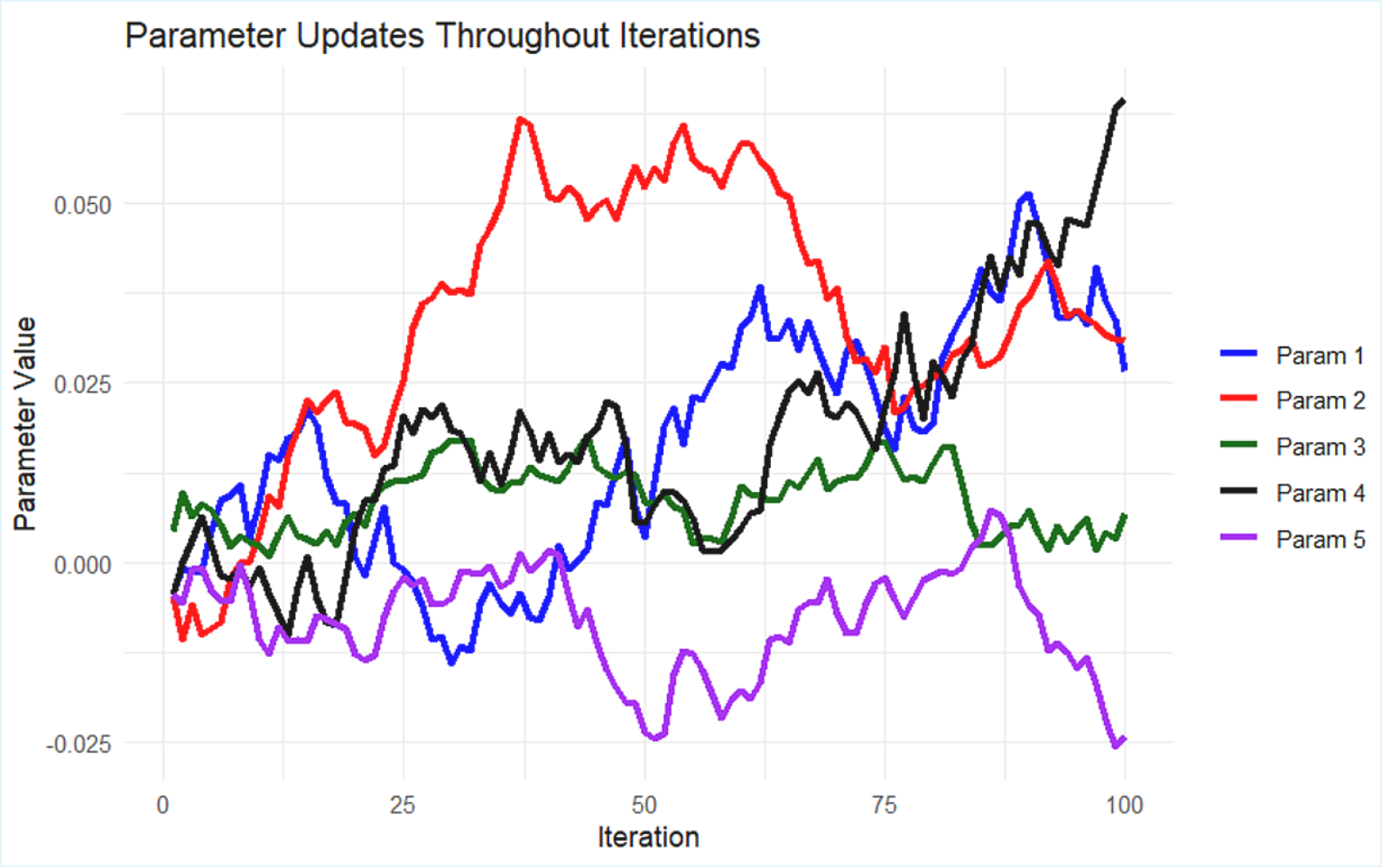}
    \caption{Evolution of five parameters during the first $100$ iterations of the stochastic gradient scheme with ADADELTA.}
    \label{fig:SGD}
\end{figure}

\Cref{fig:SGD} illustrates the behaviour of the scheme over the first $100$ iterations for five representative parameters. The parameter values remain within the narrow range $[-0.025,0.075]$, and the accumulated squared updates $E[\Delta u^2]_k$ are of order $10^{-5}$ to $10^{-4}$, which indicates that the iterates are either approaching a stationary point or oscillating mildly around it, as expected for noisy mini-batch gradients. Some parameters display initial oscillations before plateauing within a narrow band, while others explore a larger range before settling around a baseline. These noise and convergence patterns give an indication of the effectiveness of adaptive learning rates.

The strengths of this methodology begin with the robustness provided by adaptivity: the adaptive step size, built on running estimates of the squared gradients, provides a robust approach to parameter optimisation without manual tuning of a learning rate. In addition, the implementation reproduces an established adaptive stochastic gradient method, ADADELTA, which supports the fidelity of the computations. Furthermore, the parameter evolution plot gives a clear view of the gradual convergence process, which is useful for diagnostics such as detecting overfitting or divergence.

%======================================================================
\section{Traffic dynamics}\label{sec:traffic}
%======================================================================

The adjoint-based stochastic gradient method applies to both parameter identification and neural-network-based models, and vehicle and pedestrian trajectory archives such as \cite{trafficdynamics,Pedynamics} provide natural data sources. Below, two established microscopic traffic models are examined, each of which has been implemented in order to compute its optimal cost and to obtain a visual representation of its strengths and weaknesses: a follow-the-leader model of LWR type \cite{HoldenRisebro2018} and a neural-network model in the spirit of \cite{GottlichTotzeck2022}. Vehicles are indexed so that vehicle $N$ is the leader and vehicle $i<N$ follows vehicle $i+1$. The two models read
\begin{align}
\frac{\dd}{\dd t}x_i&=W^{i,i+1}_u\,\frac{x_{i+1}-x_i}{L},\quad i=1,\dots,N-1,\qquad\frac{\dd}{\dd t}x_N=v_0,\label{eq:lwr}\\
\frac{\dd}{\dd t}x_i&=W^{i,i+1}_u\big(x_{i+1}-x_i\big),\quad i=1,\dots,N-1,\qquad\frac{\dd}{\dd t}x_N=v_0,\label{eq:anntraffic}
\end{align}
respectively, where $L$ is the vehicle length and the leader moves with constant velocity $v_0$. Both are nearest-neighbour instances of \eqref{eq:ode}. In all traffic experiments $N=5$ vehicles are simulated on $[0,T]$ with $T=30$\,s. Beyond these models, a new second-order traffic model is introduced and contrasted with them with respect to the unique attributes it possesses and the trade-offs that come with its design.

\subsection{Follow-the-leader model of LWR type}

In this car-following model each vehicle adjusts its speed according to the distance to the vehicle ahead, normalised by the vehicle length; the velocity of each vehicle is scaled by the coefficient $W^{i,i+1}_u$, which reflects the drivers' reactions. We capture the individual behaviour of each vehicle while also computing the optimal cost of the calibration. Microscopic models of this type converge to the LWR equation in the many-particle limit \cite{HoldenRisebro2018}. We use the model to test whether physical parameters can be recovered from trajectories: reference trajectories were generated from \eqref{eq:lwr} with known parameters $u=(L,v_0)=(5,30)$, and these parameters were then estimated solely from the trajectory data by minimising the discrepancy $\hat J$ between simulated and observed positions. The overall goal was to assess whether the methodology can reliably infer the system parameters, which supports its potential for real-world applications in which the direct measurement of parameters is challenging.

\begin{figure}[t]
    \centering
    \includegraphics[width=0.8\linewidth]{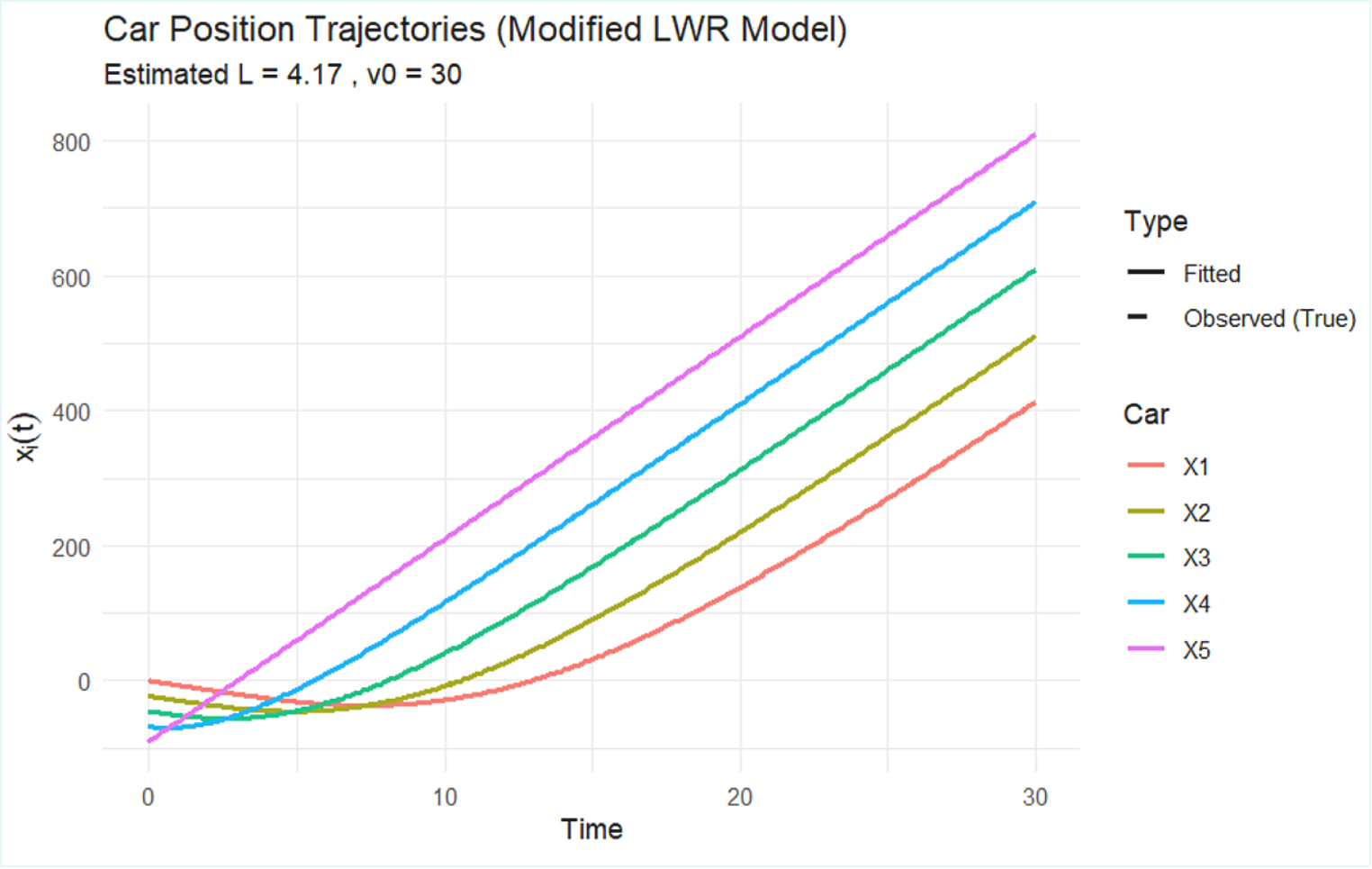}
    \caption{Follow-the-leader model of LWR type: observed (true) and fitted trajectories of five vehicles; estimated parameters $L=4.17$, $v_0=30$.}
    \label{fig:LWRPlot}
\end{figure}

\Cref{fig:LWRPlot} shows that over the $30$\,s horizon the fitted trajectories of all five vehicles follow a uniform shape, with nearly constant spacing from the middle of the time interval onward. The estimated values are close to the true parameters, especially considering the noise-free nature of the data. The leader velocity $v_0=30$ is recovered almost exactly, which demonstrates the accuracy of the approach for the dynamic parameters, while the vehicle length is underestimated ($L=4.17$ instead of $5$, an error of about $0.83$\,m). This discrepancy may stem from the initial guess, from weak sensitivity of the trajectories with respect to $L$ (only the ratio $W^{i,i+1}_u/L$ enters \eqref{eq:lwr}, which limits identifiability), or from the non-convexity of the cost landscape. The cost decreases from $144.56$ to $J=102.04$ (\cref{CostTable102}), which corresponds to a root-mean-square deviation of $\bar d\approx1.17$\,m by \eqref{eq:rmsd}; the table shows a significant reduction of the cost as the model refines itself over the iterations.

The model has several limitations. The follow-the-leader dynamics assume a specific interaction law and identical vehicles, while real traffic exhibits large variability in vehicle characteristics and external influences. The reference data are noise free, whereas real measurements contain errors. Finally, convergence to a global minimum depends on the initial guess and on the optimiser. The methodology can be enhanced by incorporating noise or measurement errors, by extending the model to account for more individual vehicle attributes, and by using real traffic data to evaluate how practical the method is.

\subsection{Neural-network traffic model}

In the second model the length normalisation of the previous model is removed. Instead, the primary focus is an interaction law inspired by neural networks within the framework of a differential equation: $W^{i,i+1}_u$ is a feed-forward network that maps the headway $x_{i+1}-x_i$ to a velocity, embedded in \eqref{eq:anntraffic}, and models how much influence neighbouring vehicles have on each other. The network output can be positive or negative, representing accelerating and decelerating influences, respectively.

\begin{figure}[t]
    \centering
    \includegraphics[width=0.8\linewidth]{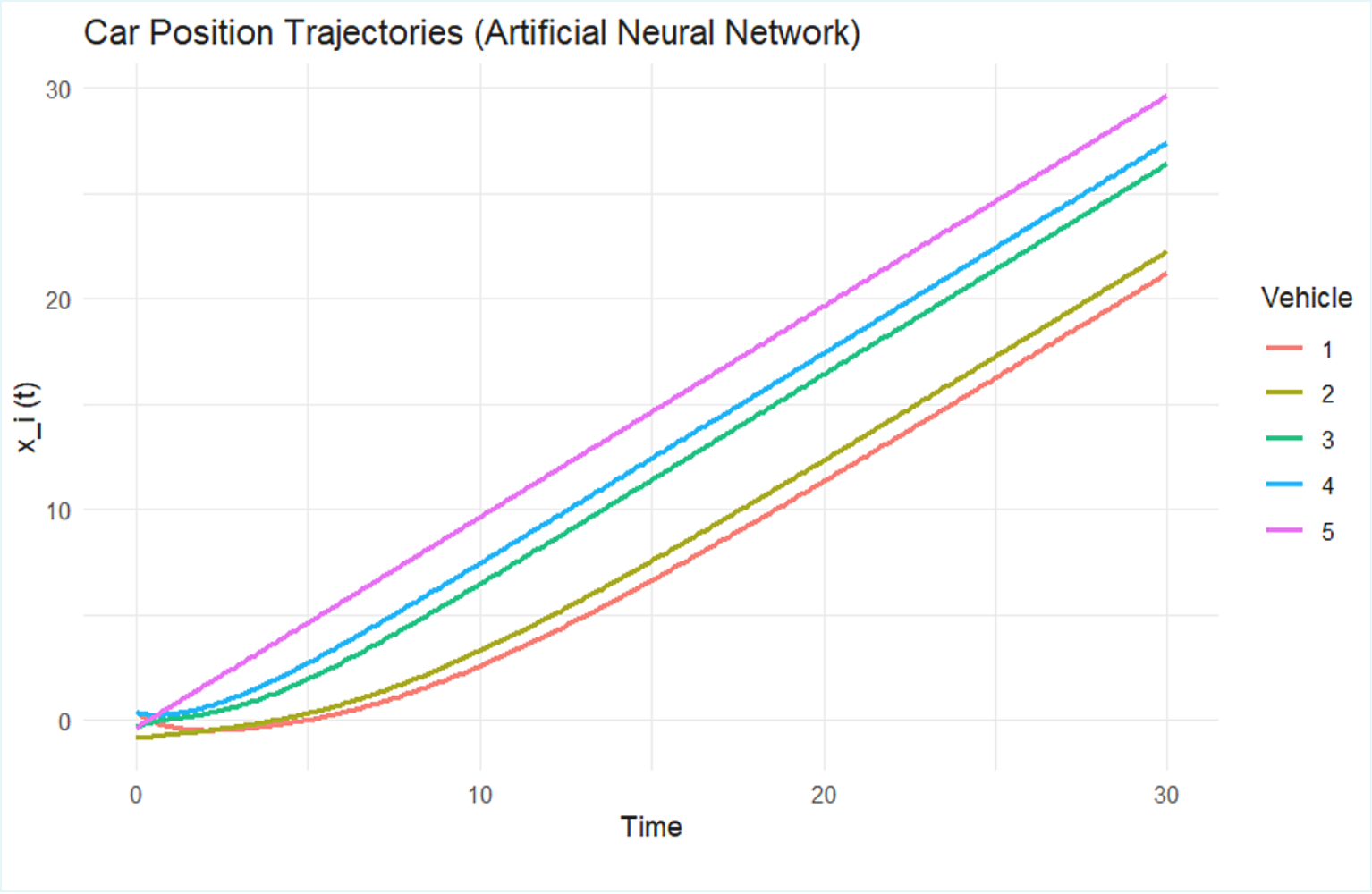}
    \caption{Neural-network traffic model: trajectories of five vehicles.}
    \label{fig:ArtNNPlot}
\end{figure}

\begin{table}[hbt]
  \centering
  \begin{minipage}{6.0in}
    \centering
    \caption{Table representing the gradual change in cost for the neural-network traffic model}
    \label{jctab}
    \begin{tabular}{|c||c|}    \hline
      Iteration & Cost \\ \hline \hline
      1 & 187.26 \\ \hline
      1000 & 183.21 \\ \hline
      2000 & 176.37 \\ \hline
      4000 & 167.20 \\ \hline
      8000 & 158.17 \\ \hline
      10000 & 151.43 \\ \hline
      12000 & 146.02 \\ \hline
    \end{tabular}
  \end{minipage}
\end{table}

The use of a neural network yields a more realistic representation of traffic dynamics than the LWR-type model, as shown in \cref{fig:ArtNNPlot}. The model does not try to mirror a fixed velocity and vehicle length; instead, more natural human biases appear, and vehicles are not always uniformly spaced. During the run, the model progressively adjusts itself to better fit the training data, and the cost decreases monotonically from $187.26$ to $J=146.02$ (\cref{jctab}), i.e.\ $\bar d\approx1.40$\,m. The decrease has not yet levelled off after $12{,}000$ iterations, which suggests that longer runs would continue to refine the model towards the optimal cost.

Initial vehicle states are sampled randomly from distributions that emulate different traffic configurations, such as uniform spacing, in order to diversify the training scenarios. This allows us to capture a large range of potential traffic patterns. The simulated trajectories reproduce several phenomena observed in real traffic. First, vehicle clusters form and dissolve depending on the local interaction strengths, mirroring congestion and its dissolution. Second, the leader's constant velocity acts as a boundary condition whose influence propagates through the platoon; an erratically driving leader would produce an irregular distribution of the vehicles behind it. Third, emergent oscillations eventually give way to overall stability: depending on the weights, the system exhibits stable flow or wave-like oscillations reminiscent of stop-and-go traffic. Random initial weights can induce repeated oscillations, which illustrates how driver variability affects traffic flow.

Randomly initialised weights also provide insight into the impact of different driver behaviours. A positive response to a widening gap indicates that the follower speeds up, whereas a negative response models braking or cautious driving, with a reduction of speed as the gap shrinks. This allows a heterogeneous traffic environment to be depicted, and the simulated trajectories show how different interactions produce distinct traffic patterns, such as stop-and-go oscillations, jams, or smooth passage. These results suggest that even with randomly initialised weights it is possible to obtain a system that reflects complex and realistic patterns.

Using one interaction law for all drivers oversimplifies real-world driver behaviour; it lacks complexity compared with actual data, but can be useful for capturing larger trends. The constant-velocity boundary condition cannot represent traffic conditions such as accidents or inflow and outflow at certain points. Finally, the model does not include lane changes, road conditions, or traffic signals, so it represents a generic stretch of road.

Several improvements could raise the sophistication of the model. The framework could be extended to capture multiple traffic lanes simultaneously. The weights could be updated from traffic data, for instance by making $W_u$ dependent on the state or on time in order to adapt to different driving behaviours. Implementing these changes should enhance the predictive power of the model and provide a better understanding for traffic management.

\subsection{A new second-order traffic model}\label{sec:newtraffic}

We now propose a second-order model that combines driver intent, control, and learned interactions with repulsion and velocity alignment:
\begin{align}
\dot x_i&=v_i,\label{eq:newtraffic1}\\
\dot v_i&=\frac1{m_i}\Big(\alpha_i\big(v_i^\des(x_i)-v_i\big)+\beta_ic_i(t)+\sum_{j\ne i}W^{i,j}_u\Big[-\frac{x_j-x_i}{|x_j-x_i|^{\phi_1}}+\phi_2\tanh(v_j-v_i)\Big]\Big),\label{eq:newtraffic2}
\end{align}
with the feedback control $c_i(t)=k_i(t)\big(v_i^\des-v_i(t)\big)$ and the time-dependent gain $k_i(t)=k_i^{\mathrm{base}}\big(1-e^{-rt}\big)^{n_k}$.

By Newton's second law, the bracket on the right-hand side of \eqref{eq:newtraffic2} is the net force acting on vehicle $i$, and dividing by the mass $m_i$ yields its acceleration. The term $\alpha_i(v_i^\des(x_i)-v_i)$ is a goal-seeking force that represents the driver's intent to travel at a desired, possibly position-dependent, velocity, as in the optimal velocity and intelligent driver models \cite{Bando1995,Treiber2000}. The term $\beta_ic_i$ is a control input representing throttle, brake, or a driver-assistance system; the base gain $k_i^{\mathrm{base}}$ is a vehicle-specific scaling that defines the maximal strength of the feedback, which is ramped up smoothly with rate $r$ and exponent $n_k$. The sum is the interaction force: the weights $W^{i,j}_u$ are retained from \cite{GottlichTotzeck2022}, while the repulsion term $-(x_j-x_i)/|x_j-x_i|^{\phi_1}$ prevents collisions by acting more strongly on nearby vehicles, with strength governed by the exponent $\phi_1$, and the alignment term $\phi_2\tanh(v_j-v_i)$ acts as a damper that drives vehicles towards matching velocities. The combination of alignment, attraction and repulsion is inspired by flocking models \cite{CuckerSmale2007}. Among the benefits of this model are parameter learning and function approximation: all coefficients and exponents can be learned from data in the same way as network weights. The model is also more expressive and adaptable than the previous models, as seen in \cref{fig:NewTrafPlot}.

\begin{rmk}\label{rmk:singular}
The repulsion term is singular at collisions and therefore not globally Lipschitz, so \cref{thm:wellposed} does not apply verbatim. However, as long as the inter-vehicle distances stay bounded below by some $\delta>0$, the right-hand side of \eqref{eq:newtraffic1}--\eqref{eq:newtraffic2} is locally Lipschitz in $(x,v)$ and continuously differentiable in all parameters, so local existence, uniqueness and the adjoint representation of \cref{thm:gradient} carry over on such time intervals. Since the repulsion grows as distances shrink, collisions are not observed in our simulations.
\end{rmk}

\begin{figure}[t]
    \centering
    \includegraphics[width=0.8\linewidth]{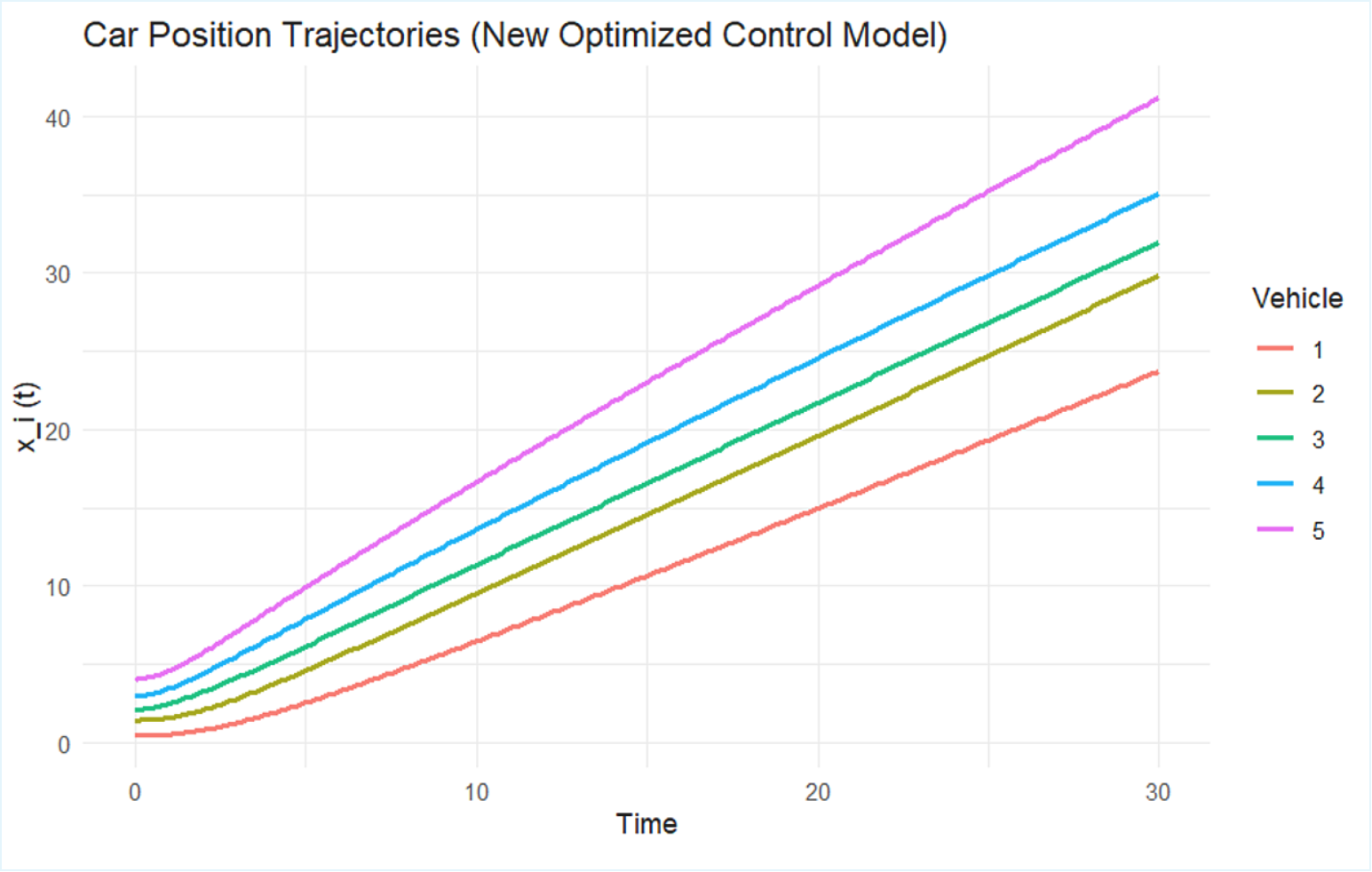}
    \caption{New second-order traffic model: trajectories of five vehicles.}
    \label{fig:NewTrafPlot}
\end{figure}

Although \cref{fig:NewTrafPlot} resembles the previous plots, the model has several attributes that make it valuable in specific scenarios. Unlike the LWR-type model, it accounts for driver intent through the goal-seeking term, and as a second-order model it evolves positions and velocities, whereas both established models prescribe the velocity directly. Individual vehicles exhibit the most diverse velocities and the least uniform behaviour among all traffic examples. The feedback gain $k_i(t)$ individualises the control response, and the interaction law combines spacing, alignment, and control instead of spacing alone. This richer structure comes at a price: the cost decreases from $2211.74$ to $J=1186.02$ (\cref{CostTable1186}), a reduction of $46\%$, but the final value is considerably higher than for the first-order models and corresponds to $\bar d\approx3.98$\,m.

The choice of model therefore depends on its purpose. For aggregate trends over large samples the established first-order models may be preferable because they are cheap to calibrate and fit well. For moderate sample sizes in which individual behaviour matters, the new model offers interpretable, vehicle-specific parameters. There is no such thing as a perfect model, and deciding what can and cannot be sacrificed is a large part of the trade-off in choosing which model to work with.

%======================================================================
\section{Crowd dynamics}\label{sec:crowd}
%======================================================================

We next apply the same methodology to crowds, with pedestrians in place of vehicles. This introduces environmental forces from walls and obstacles and shifts the focus from vehicles to individual humans. As for the traffic models, we again employ the stochastic gradient algorithm. Specifically, we have implemented the social force model \cite{HelbingMolnar1995} and a neural-network model \cite{GottlichKnapp2020,GottlichTotzeck2022} in order to compute their optimal costs; both are second-order models in $\R^2$:
\begin{align}
\dot x_i&=v_i,\qquad\dot v_i=v_i^D+\frac1{mN}\sum_{j=1}^NF_{i,j}+\frac1{mN_w}\sum_{k=1}^{N_w}F_{i,w_k},\label{eq:sfm}\\
\dot x_i&=v_i,\qquad\dot v_i=v_i^D+\frac1{mN}\sum_{j=1}^NW^{i,j}_u\big(x_i-x_j,v_i-v_j\big)+\frac1{mN_w}\sum_{k=1}^{N_w}W^{i,w}_u\big(x_i-w_k,v_i\big),\label{eq:pednn}
\end{align}
respectively, where $w_1,\dots,w_{N_w}$ are discretisation points of the walls and $m$ is the pedestrian mass. The relaxation towards the desired motion is
\begin{equation}\label{eq:vD}
v_i^D=\frac1\tau\big(v_i^\des(t)-v_i(t)\big),\qquad v_i^\des(t)=\frac{x_D-x_i(t)}{|x_D-x_i(t)|}\,|v_i(t)|,
\end{equation}
with relaxation time $\tau$ and destination $x_D$. The quantities of particular importance are the relaxation time $\tau$, the number of wall points $N_w$, the number of pedestrians $N$, and the force constants $A,B,k,\kappa$ introduced below. In all pedestrian experiments $N=8$ pedestrians are simulated over $T=120$\,s, with individual speeds drawn randomly from $[1.25,2.00]$\,m/s.

\subsection{Social force model}

In the social force model of Helbing and Moln\'ar each individual moves towards a desired destination and adjusts its velocity under the influence of other pedestrians and of walls. With pedestrian radius $r$, $h(s):=\max\{s,0\}$, and
\[
d_{ij}=|x_i-x_j|,\qquad n_{ij}=\frac{x_i-x_j}{d_{ij}},\qquad t_{ij}=\big(-n_{ij}^2,\,n_{ij}^1\big),\qquad\Delta v^t_{ji}=(v_j-v_i)\cdot t_{ij},
\]
the pedestrian--pedestrian and pedestrian--wall forces are
\begin{align}
F_{i,j}&=\Big(A\exp\Big(\frac{2r-d_{ij}}{B}\Big)+k\,h(2r-d_{ij})\Big)n_{ij}+\kappa\,h(2r-d_{ij})\,\Delta v^t_{ji}\,t_{ij},\label{eq:Fij}\\
F_{i,w}&=\Big(A\exp\Big(\frac{r-d_{iw}}{B}\Big)+k\,h(r-d_{iw})\Big)n_{iw}-\kappa\,h(r-d_{iw})\,(v_i\cdot t_{iw})\,t_{iw},\label{eq:Fiw}
\end{align}
where $d_{iw}$, $n_{iw}$, $t_{iw}$ are defined analogously with respect to a wall point. The exponential term models social repulsion, while the terms with $k$ and $\kappa$ model body compression and sliding friction upon contact, as in the extension of the social force model by Helbing, Farkas and Vicsek \cite{HelbingFarkasVicsek2000}.

We describe pedestrian dynamics by a computational model that integrates boundary conditions, social interactions, and environmental influences, and takes pedestrian repulsion, wall constraints, and velocity limitations into account. The primary goal is to evaluate how a small-scale pedestrian model reproduces observed trajectories, and to analyse the impact of the individual forces on the trajectories. Pedestrians are agents subject to a repulsion force that prevents collisions; the forces depend on relative velocities and on proximity to other agents, following the exponential characteristics of the social force model. Walls act as upper and lower boundaries and limit the region in which agents can move. The physical parameters (interaction range, repulsion strength, body force, friction) are calibrated by \cref{alg:sgd}. Pedestrians are initialised on a grid with prescribed spacing, their destinations are drawn randomly from a prescribed region in order to simulate a realistic journey, and each individual has a maximal speed drawn randomly within physical bounds, in contrast to the fixed speeds used in \cite{TordeuxEtAl2019}.

\begin{figure}[t]
    \centering
    \includegraphics[width=0.8\linewidth]{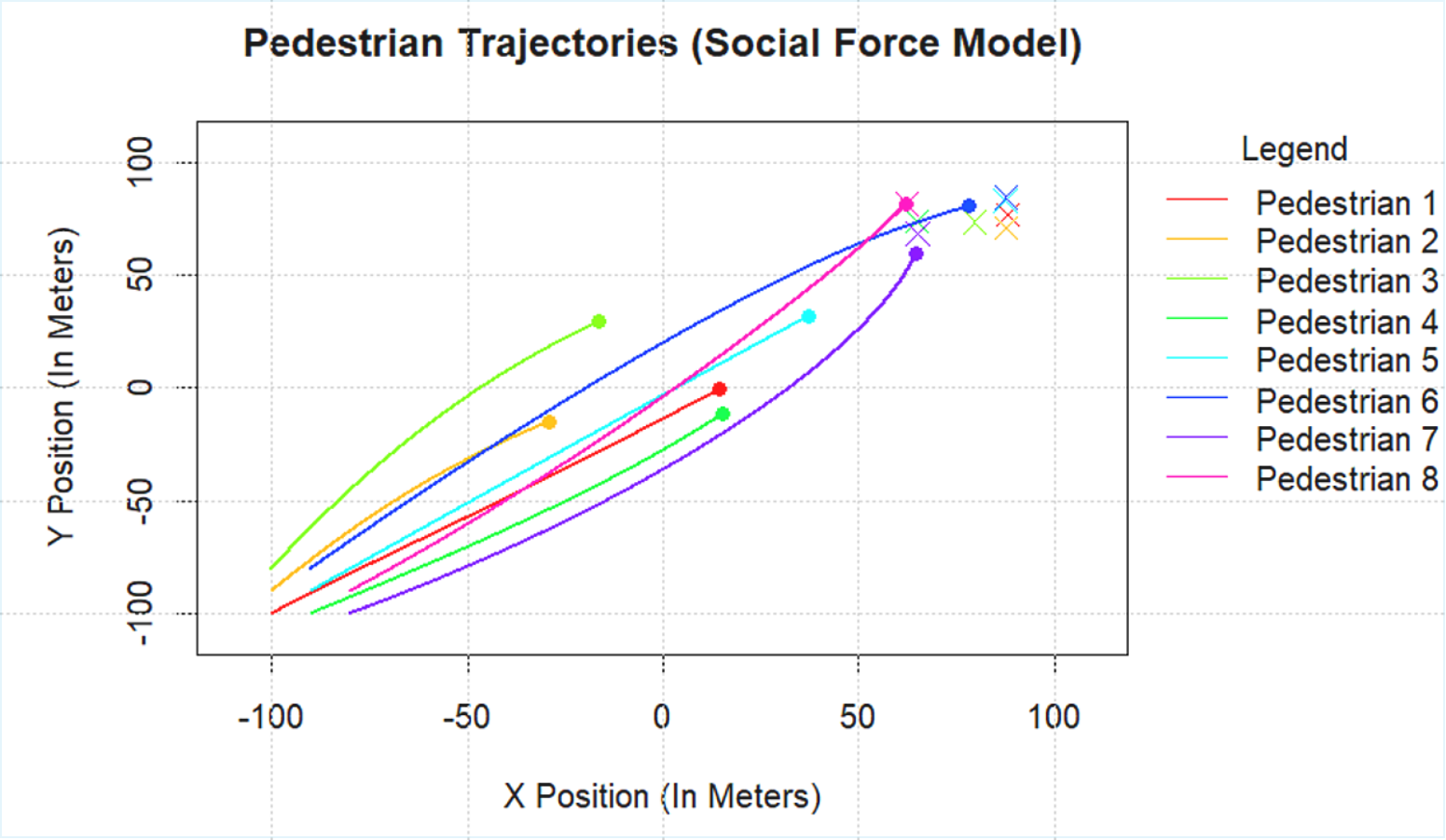}
    \caption{Social force model: trajectories of eight pedestrians (circles: final positions; crosses: destinations).}
    \label{fig:SocForcePlot}
\end{figure}

\begin{figure}[t]
    \centering
    \includegraphics[width=0.8\linewidth]{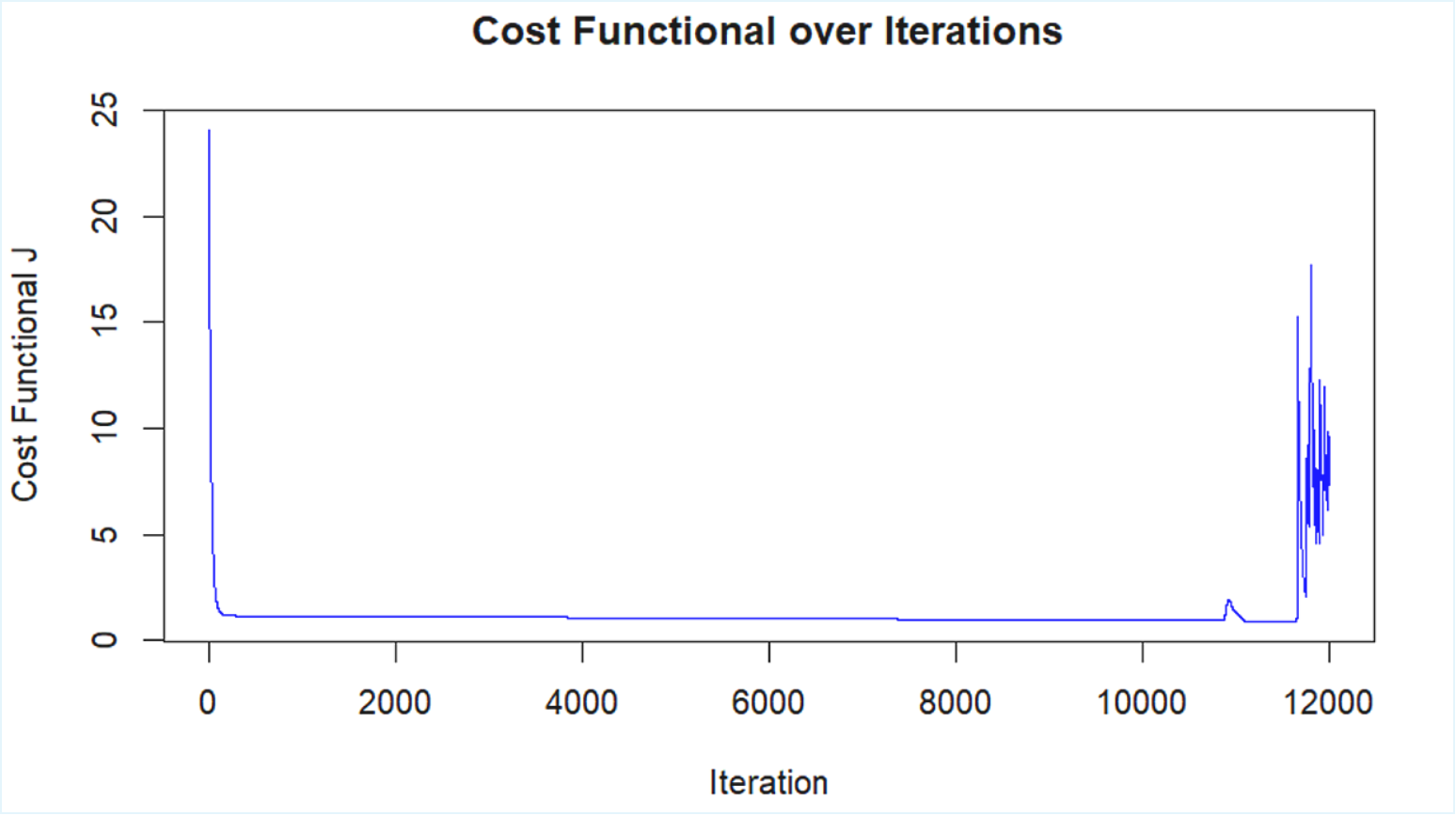}
    \caption{Evolution of the cost functional for the social force model.}
    \label{fig:CostFuncPlot}
\end{figure}

\Cref{fig:SocForcePlot} shows that all pedestrians move towards their destinations under the influence of the forces and the boundaries. They share space while avoiding collisions and respecting the walls; the social repulsion disperses the group, eliminates overlap, and lets the same locations be occupied at different times. Individuals adopt distinct, often non-straight, paths to avoid collisions. Within the time horizon some pedestrians reach their destinations and the others make significant progress. Since pedestrians move much more slowly than vehicles, the costs are considerably smaller than in the traffic experiments: the final cost is $J=7.31$, corresponding to $\bar d\approx0.12$\,m. Owing to the complexity of the crowd models, the cost estimate oscillates more strongly than in the traffic case. As shown in \cref{fig:CostFuncPlot}, the cost drops rapidly during the first iterations and then stays almost constant, but it exhibits pronounced oscillations towards the end of the run (after roughly $11{,}000$ iterations). Such late spikes are typical of adaptive stochastic methods when the gradient estimates become small relative to $\varepsilon$; early stopping, a smaller $\varepsilon$, or gradient clipping are simple remedies.

There are several avenues for improving this model. Each pedestrian could be given individual personal-space forces to simulate a more realistic variety of behaviours. The scenario could be expanded by simulating a larger crowd and by letting pedestrians interact with dynamic environmental elements rather than only static walls. Furthermore, the model could be refined by calibrating the interaction forces on real-world data, adding another level of sophistication to the results.

\subsection{Neural-network pedestrian model}

The neural-network approach replaces the analytical pairwise forces by a data-driven model that approximates them as functions of relative positions and velocities, see \eqref{eq:pednn}. As a function approximator, the network can capture complex, nonlinear interaction patterns between pedestrians and between pedestrians and obstacles. Following \cite{GottlichKnapp2020}, trajectory data are preprocessed by slicing them into short time intervals in which the number of people is fixed; this lets the model focus on short-term dynamics and simplifies the learning process, because the force function can be assumed constant or smoothly varying within these short intervals, and it allows the slices to serve as the samples of the mini-batches in \cref{alg:sgd}.

\begin{figure}[t]
    \centering
    \includegraphics[width=0.8\linewidth]{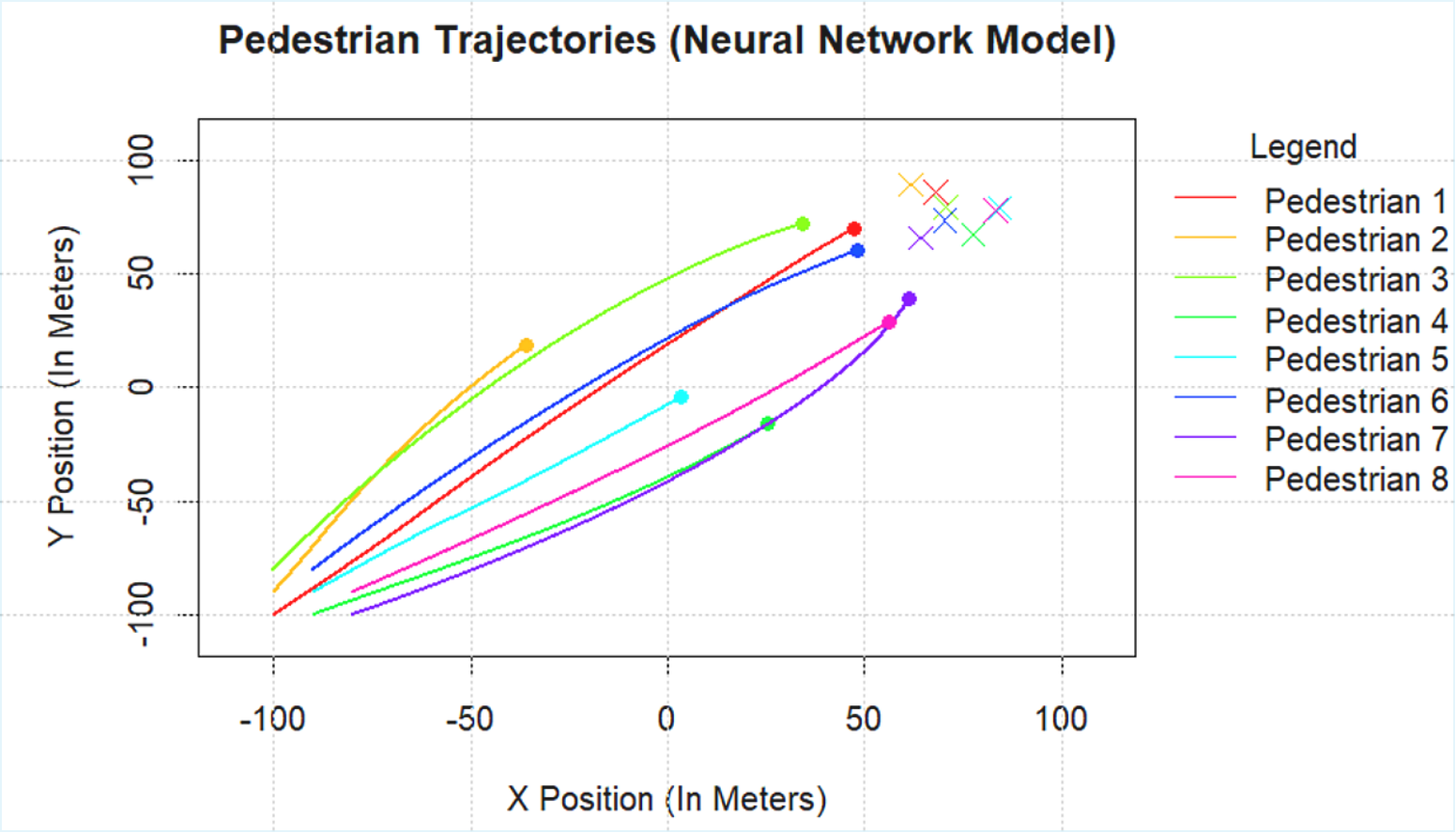}
    \caption{Neural-network pedestrian model: trajectories of eight pedestrians.}
    \label{fig:NNPeoplePlot}
\end{figure}

\Cref{fig:NNPeoplePlot} shares the overall picture of the previous experiment: individuals follow their own efficient routes towards their destinations. Compared with the social force model, the paths are more clearly separated, without the large cluster observed before. All pedestrians make significant progress, but none reaches its destination within the time horizon. A likely cause is the wall at $y=90$, whose repulsion is strong enough to slightly reduce the velocity of the pedestrians heading towards it, just enough to prevent them from completing their route. As stated above, all pedestrian models use the same time horizon of $120$\,s and the same range of speeds. The cost decreases from $23.93$ to $J=5.86$ (\cref{CostTable5.86}), a reduction of $75.5\%$ and the lowest final cost of all models considered, corresponding to $\bar d\approx0.11$\,m.

The approach can be improved by adapting the interaction ranges to proximity and movement context, for example with an architecture that processes local and global positional information separately in order to distinguish immediate collision avoidance from navigation around walls and other people. Encoding wall geometries or boundary conditions directly as network inputs would enable the model to learn more nuanced and consistent responses to boundaries, further improving obstacle avoidance and adherence to environmental constraints. This would result in a more flexible and accurate representation of pedestrian movement, particularly in complex environments.

\subsection{A new crowd model}\label{sec:newcrowd}

We propose a new crowd model that shares several features with the new traffic model:
\begin{equation}\label{eq:newcrowd}
\dot x_i=v_i,\qquad\dot v_i=\frac1{m_i}\Big(\alpha_i(x_i)\big(v_i^\des-v_i\big)+\sum_{j\ne i}W^{i,j}_u\,\Phi_u\big(x_j-x_i,v_j-v_i\big)-D_iv_i+\beta_ic_i\Big).
\end{equation}
The first term is a state-dependent motivation: the gain $\alpha_i(x_i)$ adjusts the motivation of each individual according to its location. The sum contains the interaction weights and a neural interaction function $\Phi_u$ that outputs a force vector depending on the relative state and can learn attraction and repulsion from data. The dissipative term $-D_iv_i$ and the control input are common in multi-agent coordination \cite{RenBeardAtkins2005}: the damping coefficient $D_i$ represents friction or resistance; by removing energy from the system it accounts for human traits such as fatigue or reluctance to speed up and prevents unbounded acceleration. In the control term $\beta_ic_i$, the coefficient $\beta_i$ scales the influence of an external input $c_i$, such as a guidance signal or an announcement; in a crowd without external stimuli, $c_i\equiv0$. Since $\Phi_u$ is a network in the sense of \cref{def:NN} and the remaining terms are linear in the velocity, \cref{thm:wellposed,thm:gradient} extend to \eqref{eq:newcrowd} whenever $\alpha_i$ is Lipschitz.

\begin{figure}[t]
    \centering
    \includegraphics[width=0.8\linewidth]{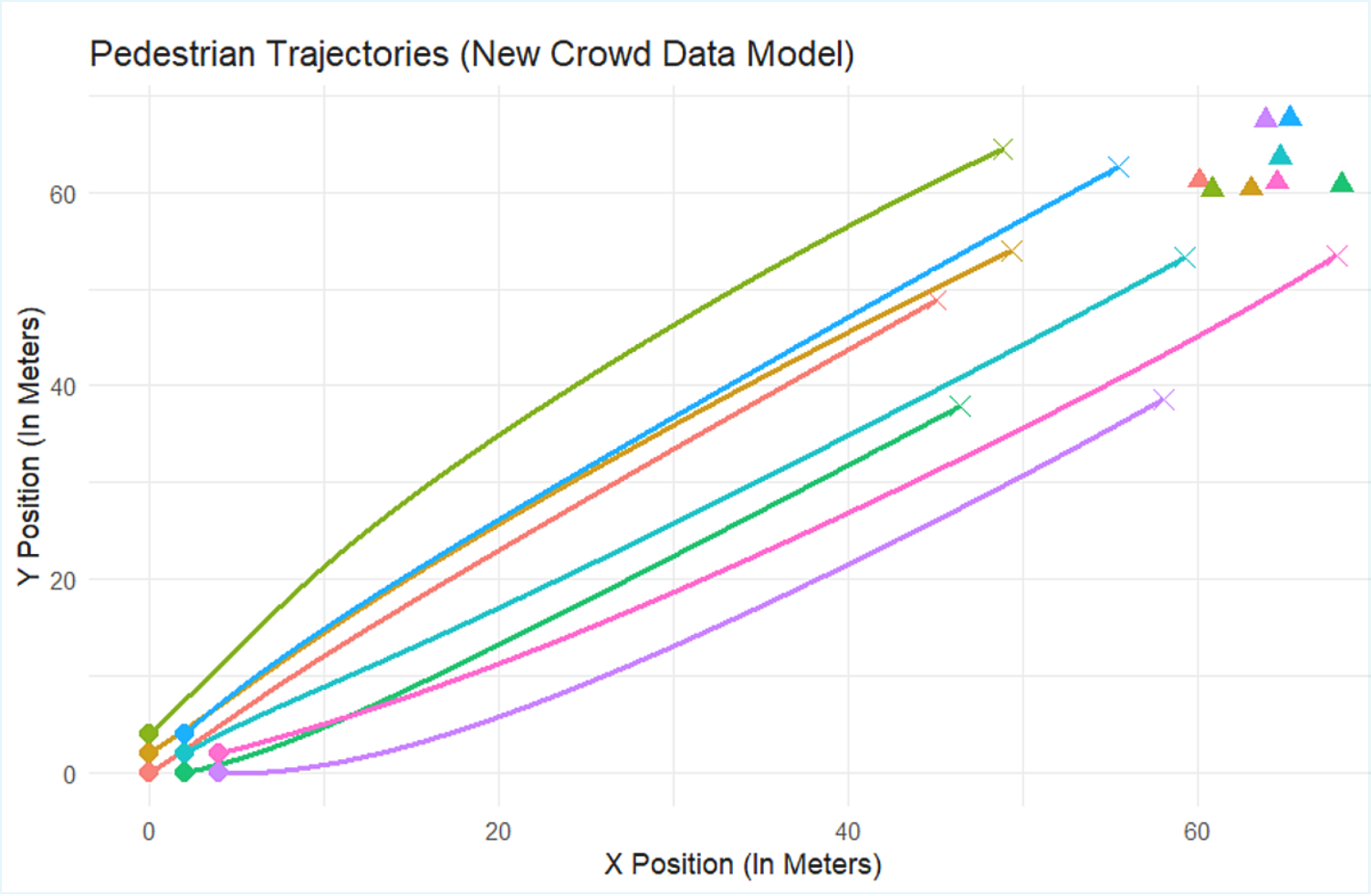}
    \caption{New crowd model: trajectories of eight pedestrians.}
    \label{fig:NewCrowdPlot}
\end{figure}

\begin{table}[hbt]
  \centering
  \begin{minipage}{6.0in}
    \centering
    \caption{Table representing the gradual iterative change in cost for the new crowd model}
    \label{jctab77}
    \begin{tabular}{|c||c|}    \hline
      Iteration & Cost \\ \hline \hline
      1 & 32.93 \\ \hline
      1000 & 24.62 \\ \hline
      2000 & 22.32 \\ \hline
      4000 & 19.68 \\ \hline
      6000 & 17.89 \\ \hline
      8000 & 16.71 \\ \hline
      10000 & 15.90 \\ \hline
      12000 & 15.32 \\ \hline
    \end{tabular}
  \end{minipage}
\end{table}

As shown in \cref{fig:NewCrowdPlot}, no individual reaches its destination within the time horizon. The conditions of this experiment were kept as close as possible to the previous two for the sake of comparison, so the most likely cause is the combination of damping and position-dependent motivation. Other causes, such as strong repulsion or the absence of external guidance, are less likely because the plot shows neither clusters nor blockages.

The model differs from the previous ones through motivation, external control, and heterogeneity. Instead of a fixed goal-velocity term it uses position-dependent motivation; for instance, individuals may be less motivated in congested areas and more motivated near exits. While the earlier models are purely self-driven or reactive, the new model admits external stimuli such as guidance signals, so that assisted and unassisted navigation can be simulated within one framework. Individual masses, motivations, damping and control give each person a distinct response to the same situation, and these dynamic traits allow a much more realistic picture of how individuals react. The cost is more than halved, from $32.93$ to $J=15.32$ (\cref{jctab77}), corresponding to $\bar d\approx0.18$\,m, and is still decreasing at the end of the run. The initial cost being more than twice the final one shows that the model has made significant progress in adjusting its fit over the course of the run.

%======================================================================
\section{Comparative discussion}\label{sec:discussion}
%======================================================================

\Cref{tab:summary} summarises the calibration results of all six models. The deviation $\bar d$ is computed from the final cost by \eqref{eq:rmsd}.

\begin{table}[t]
\centering
\caption{Summary of the calibration results. $J_1$: cost at the first iteration; $J_{\mathrm{final}}$: cost after $12{,}000$ iterations; $\bar d$: root-mean-square deviation \eqref{eq:rmsd}.}\label{tab:summary}
\begin{tabular}{llcccrrc}
\toprule
Application & Model & Order & $N$ & $T$ [s] & $J_1$ & $J_{\mathrm{final}}$ & $\bar d$ [m]\\
\midrule
Traffic & LWR-type follow-the-leader & 1 & 5 & 30 & 144.56 & 102.04 & 1.17\\
Traffic & Neural network & 1 & 5 & 30 & 187.26 & 146.02 & 1.40\\
Traffic & New second-order model & 2 & 5 & 30 & 2211.74 & 1186.02 & 3.98\\
\midrule
Crowd & Social force & 2 & 8 & 120 & -- & 7.31 & 0.12\\
Crowd & Neural network & 2 & 8 & 120 & 23.93 & 5.86 & 0.11\\
Crowd & New crowd model & 2 & 8 & 120 & 32.93 & 15.32 & 0.18\\
\bottomrule
\end{tabular}
\end{table}
% NOTE from Dao: \bar d is computed consistently as sqrt(2J/(N T)). This reproduces the traffic
% values of the thesis (1.17, 1.40, 3.98 m) exactly. For the pedestrian models the thesis reported
% 0.35, 0.26 and 0.45 m, which are not consistent with one formula; Edgar: please verify N and T used in
% the pedestrian cost against the code.

Several observations can be made.
\begin{enumerate}[label=(\roman*),leftmargin=2em]
\item \emph{Monotone decrease.} In all experiments with recorded cost histories the cost decreases monotonically over the reported checkpoints, with reductions of $22\%$ (neural-network traffic), $29\%$ (LWR type), $46\%$ (new traffic), $53\%$ (new crowd) and $76\%$ (neural-network crowd). Several histories are still decreasing at the end of the run, so longer runs are expected to improve the fits further.
\item \emph{Neural interactions fit pedestrian data best.} The neural-network pedestrian model attains the lowest final cost and the largest relative reduction, which supports the use of learned interaction forces where analytical force laws are uncertain.
\item \emph{Richer models are not automatically better fits.} Both new second-order models attain higher final costs than the simpler models of the same application. With more parameters and more nonlinear terms the cost landscape becomes harder to explore, some parameters are only weakly identifiable (compare the LWR-type test, where only $W/L$ enters the dynamics), and convergence within a fixed iteration budget is slower. The benefit of the new models lies in interpretable, individual-level parameters (motivation, gain, damping, control) rather than in a smaller misfit.
\item \emph{Robustness of the optimiser.} ADADELTA removes the need to tune a learning rate, but the late cost spikes for the social force model (\cref{fig:CostFuncPlot}) show that adaptive methods can become unstable when the gradient estimates are small; monitoring, early stopping or clipping should be used in practice.
\item \emph{Computational cost.} By \cref{rmk:cost} each iteration requires one forward and one backward solve per sample, independently of the number of network weights. Follow-the-leader models scale linearly in $N$, all-to-all crowd models quadratically, which suggests neighbour lists or cut-off radii for large crowds.
\end{enumerate}

The present study also has limitations that should be kept in mind. The experiments use small numbers of agents, a single calibration run per model, and, in the LWR-type test, synthetic noise-free reference data. A systematic validation would include repeated runs with different random seeds, noisy data, held-out test trajectories, and calibration on real trajectory archives such as \cite{trafficdynamics,Pedynamics}.

%======================================================================
\section{Future directions}\label{sec:future}
%======================================================================

\emph{Traffic.} The traffic model offers many avenues for improvement, especially in goal navigation, behavioural extensions, and interactions with a dynamic environment. At present, each vehicle is driven by a fixed desired velocity; vehicles are unaware of their destination and do not adapt their speed to the remaining distance or to any urgency. Assigning each vehicle a predefined route, possibly with intermediate waypoints, together with time-sensitive instructions that mimic off-ramps or traffic signs, would significantly enhance the realism of the model.

Individual drivers also differ markedly from those around them. Adding driver states such as stress, fatigue, or caution could be used to prioritise certain goals for each vehicle; for example, a driver who has to weigh safety against speed on a given route would make the simulation considerably more dynamic. This would also increase the chance of observing potential outliers on the road, which is useful for safety considerations.

The driving environment is currently empty and static. Implementing guard rails, crosswalks, weather conditions, and similar features would bring the results of the model closer to real-world data. Isolating the effect of a single potential hazard from that of several hazards is a further question that a future model might address. In this way, much more individualised simulations can be created while maintaining a common underlying description of the drivers on the road.

\emph{Crowds.} For the crowd model, progress could be made through more advanced position tracking, perception constraints, terrain boundaries, or cognitive modelling. At present, the model uses a desired velocity as control target. In the real world, however, individuals head for more complex target positions, such as exits or restrooms. Distinct motivations could be assigned to particular positions to represent urgency more accurately. Furthermore, goals shift over time, so desired coordinates that change after a certain amount of time are another aspect worth investigating.

Regarding perception, the present interaction law assumes that all individuals observe each other equally, which is not realistic. Instead, interactions can be filtered through a field of view, so that lines of sight depend on the distance and relative angle to others; people tend to react to others in front of them much more than to those behind them. Realism could be increased further by letting environmental obstacles, such as a hallway or a fountain, restrict what people can see. It may also be beneficial to introduce a memory of previously seen individuals.

The new crowd model currently operates in a fairly open space with few barriers. Physical constraints such as curbs and furniture can reroute an individual's path to varying degrees. It may also help to introduce bottlenecks at which individuals are forced to backtrack and reassess their movement. Individual obstacles can moreover carry their own repulsion forces instead of the static wall force currently used.

Finally, cognitive layering can introduce significantly more complex crowd dynamics. If individuals remember positions, velocities, or interactions, the simulations become more faithful to real data. With larger numbers of agents than in our examples, individuals can also be assigned to groups, such as families or friends; travelling in small groups changes how people traverse certain paths, particularly when the identities of those involved (children, adults, or elderly people) are taken into account.

There is considerable weight on many factors in the creation and use of a dynamic model. Pedestrians and vehicles each face distinct situations and forces that may or may not be taken into account, and the inclusion of each individual aspect determines how useful the model truly is for the situation at hand.

\emph{Methodology.} From the mathematical point of view, it is natural to study the mean-field limit $N\to\infty$ of the calibration problem and its connection to kinetic and macroscopic models \cite{PareschiToscani2013,Carrillo2010}, to quantify the uncertainty of the identified parameters in a Bayesian setting \cite{Stuart2010,CorbettaEtAl2015}, and to compare ADADELTA with other adaptive or derivative-free methods.

%======================================================================
\section{Conclusions}\label{sec:conclusion}
%======================================================================

We have presented a unified optimal control framework for calibrating traffic and pedestrian models from trajectory data, covering both classical parameter identification and the learning of interaction forces by neural networks. For the network-driven particle system we established well-posedness, Lipschitz stability with respect to initial data and network weights, existence of optimal parameters, and a first-order optimality system with an explicit backward adjoint equation. The resulting gradient representation underlies a projected mini-batch ADADELTA scheme whose cost per iteration does not depend on the number of parameters. Using this scheme we calibrated two established models for each application and proposed two new second-order models that incorporate driver or pedestrian intent, learned interactions, repulsion and alignment or damping, and external control. The experiments show consistent cost reduction for all models, with the neural-network pedestrian model achieving the best fit, while the new models offer individualised and interpretable behaviour at the price of a harder calibration problem. Validation on real data, larger populations, and the extensions outlined above are the next steps.

\section*{Acknowledgements}
The authors thank the Department of Mathematics and Statistics at San Diego State University for its support. Parts of this work are based on the Master's thesis of the first author at San Diego State University.

\section*{Statements and declarations}
\noindent\textbf{Funding.} The research of D.~Nguyen was supported by start-up funds from San Diego State University.

\noindent\textbf{Competing interests.} The authors declare that they have no competing interests.

\noindent\textbf{Author contributions.} E.L.: Methodology, Software, Investigation, Visualization, Writing -- original draft. D.N.: Conceptualization, Methodology, Formal analysis, Supervision, Writing -- review \& editing.

\noindent\textbf{Data and code availability.} The trajectory archives used as data sources are publicly available \cite{trafficdynamics,Pedynamics}. The code used to generate the results is available from the corresponding author upon reasonable request.

\appendix
\section{Additional cost tables}\label{app:tables}

\begin{table}[H]
  \centering
  \begin{minipage}{6.0in}
    \centering
    \caption{Table representing the gradual change in cost for the LWR-type simulation}
    \label{CostTable102}
    \begin{tabular}{|c||c|}    \hline
      Iteration & Cost \\ \hline \hline
      1 & 144.56 \\ \hline
      1000 & 134.40 \\ \hline
      2000 & 125.62 \\ \hline
      4000 & 118.59 \\ \hline
      8000 & 112.13 \\ \hline
      10000 & 106.81 \\ \hline
      12000 & 102.04 \\ \hline
    \end{tabular}
  \end{minipage}
\end{table}

\begin{table}[H]
  \centering
  \begin{minipage}{6.0in}
    \centering
    \caption{Table representing the gradual change in cost for the new traffic model}
    \label{CostTable1186}
    \begin{tabular}{|c||c|}    \hline
      Iteration & Cost \\ \hline \hline
      1 & 2211.74 \\ \hline
      1000 & 1903.21 \\ \hline
      2000 & 1776.37 \\ \hline
      4000 & 1574.20 \\ \hline
      8000 & 1448.16 \\ \hline
      10000 & 1291.30 \\ \hline
      12000 & 1186.02 \\ \hline
    \end{tabular}
  \end{minipage}
\end{table}

\begin{table}[H]
  \centering
  \begin{minipage}{6.0in}
    \centering
    \caption{Table representing the gradual change in cost for the pedestrian neural-network model}
    \label{CostTable5.86}
    \begin{tabular}{|c||c|}    \hline
      Iteration & Cost \\ \hline \hline
      1 & 23.93 \\ \hline
      1000 & 17.24 \\ \hline
      2000 & 15.65 \\ \hline
      4000 & 13.80 \\ \hline
      6000 & 10.42 \\ \hline
      8000 & 7.93 \\ \hline
      10000 & 5.98 \\ \hline
      12000 & 5.86 \\ \hline
    \end{tabular}
  \end{minipage}
\end{table}

\end{document}